\newenvironment{breakablealgorithm}
  {
   \begin{center}
     \refstepcounter{algorithm}
     \hrule height.8pt depth0pt \kern2pt
     \renewcommand{\caption}[2][\relax]{
       {\raggedright\textbf{\ALG@name~\thealgorithm} ##2\par}%
       \ifx\relax##1\relax 
         \addcontentsline{loa}{algorithm}{\protect\numberline{\thealgorithm}##2}%
       \else 
         \addcontentsline{loa}{algorithm}{\protect\numberline{\thealgorithm}##1}%
       \fi
       \kern2pt\hrule\kern2pt
     }
  }{
     \kern2pt\hrule\relax
   \end{center}
  }
\theoremstyle{definition}
\newtheorem{theorem}{Theorem}[section]
\newtheorem{lemma}{Lemma}[section]
\newcommand{\cond}[1]{\operatorname{cond}\left(#1\right)}
\newcommand{\fl}[1]{\operatorname{fl}\left(#1\right)}
\newcommand{\bigO}[1]{\mathcal{O}\left(#1\right)}
\newcommand{\mach}{\mathbf{u}}
\newcommand{\db}[1]{
  \ifthenelse{\equal{#1}{1}}
             {\partial b}
             {\partial^{#1} b}
}
\newcommand{\cdb}[1]{
  \ifthenelse{\equal{#1}{1}}
             {\widehat{\partial b}}
             {\widehat{\partial^{#1} b}}
}
\def\ps@pprintTitle{%
  \let\@oddhead\@empty
  \let\@evenhead\@empty
  \def\@oddfoot{\footnotesize\itshape
    Published in Applied Mathematics and Computation (\cite{Hermes2019})
      \hfill April 5, 2019}%
  \let\@evenfoot\@oddfoot}
\begin{document}
\hypersetup{
  urlcolor=MidnightBlue,
  linkcolor=MidnightBlue,
  citecolor=ForestGreen,
}

\begin{frontmatter}

\title{Compensated de Casteljau algorithm in \(K\) times the working precision}
\author[djh]{Danny Hermes}\ead{dhermes@berkeley.edu}
\address[djh]{UC Berkeley, 970 Evans Hall \#3840, Berkeley, CA 94720-3840 USA}

\begin{abstract}
In computer aided geometric design a polynomial is usually represented in
Bernstein form. This paper presents a family of compensated algorithms to
accurately evaluate a polynomial in Bernstein form with floating point
coefficients. The principle is to apply error-free transformations to
improve the traditional de Casteljau algorithm. At each stage of computation,
round-off error is passed on to first order errors, then to second order
errors, and so on. After the computation has been ``filtered'' \((K - 1)\)
times via this process, the resulting output is as accurate as the de Casteljau
algorithm performed in \(K\) times the working precision. Forward error
analysis and numerical experiments illustrate the accuracy of this family
of algorithms.
\end{abstract}

\begin{keyword}
Polynomial evaluation \sep Compensated algorithm \sep
Floating-point arithmetic \sep Bernstein polynomial \sep
Error-free transformation \sep Round-off error
\end{keyword}

\end{frontmatter}

\section{Introduction}

In computer aided geometric design, polynomials are usually expressed in
Bernstein form. Polynomials in this form are usually evaluated by the
de Casteljau algorithm. This algorithm has a round-off error bound
which grows only linearly with degree, even though the number of
arithmetic operations grows quadratically. The Bernstein basis is
optimally suited (\cite{Farouki1987, Delgado2015, Mainar2005})
for polynomial evaluation; it is
typically more accurate than the monomial basis, for example in
Figure~\ref{fig:horner-inferior} evaluation via Horner's method produces
a jagged curve for points near a triple root, but the de Casteljau algorithm
produces a smooth curve. Nevertheless the de Casteljau
algorithm returns results arbitrarily less accurate than the working
precision \(\mach\) when evaluating \(p(s)\) is ill-conditioned.
The relative accuracy of the computed
evaluation with the de Casteljau algorithm (\texttt{DeCasteljau}) satisfies
(\cite{Mainar1999}) the following a priori bound:
\begin{equation}\label{de-casteljau-error}
  \frac{\left|p(s) - \mathtt{DeCasteljau}(p, s)\right|}{\left|p(s)\right|} \leq
  \cond{p, s} \times \bigO{\mach}.
\end{equation}
In the right-hand side of this inequality, \(\mach\) is the computing
precision and the condition number \(\cond{p, s} \geq 1\) only depends
on \(s\) and the Bernstein coefficients of \(p\) --- its expression will
be given further.

\begin{figure}
  \includegraphics[width=0.9375\textwidth]{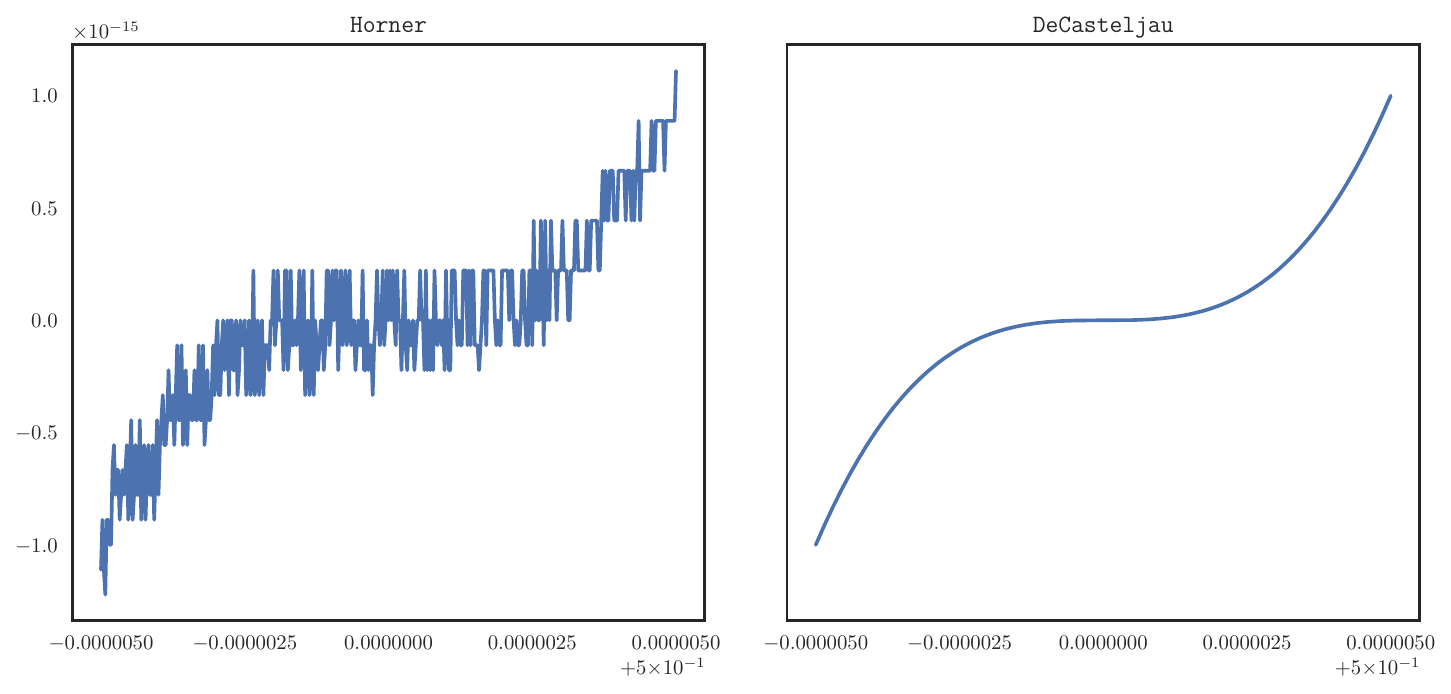}
  \centering
  \captionsetup{width=.75\linewidth}
  \caption{Comparing Horner's method to the de Casteljau method for
    evaluating \(p(s) = (2s - 1)^3\) in the neighborhood of its
    multiple root \(1/2\).}
  \label{fig:horner-inferior}
\end{figure}

For ill-conditioned problems, such as evaluating \(p(s)\) near a
multiple root, the condition number may be arbitrarily large, i.e.
\(\cond{p, s} > 1 / \mach\), in
which case most or all of the computed digits will be incorrect.
In some cases, even the order of magnitude of the computed value
of \(p(s)\) can be incorrect.

To address ill-conditioned problems, error-free transformations (EFT) can
be applied in \textit{compensated algorithms} to account for round-off.
Error-free transformations were studied in great detail in \cite{Ogita2005}
and open a large number of applications.
In \cite{langlois_et_al:DSP:2006:442}, a compensated Horner's algorithm was
described to evaluate a polynomial in the monomial basis. In \cite{Jiang2010},
a similar method was described to perform a compensated version of the de
Casteljau algorithm. In both cases, the \(\cond{p, s}\) factor is moved
from \(\mach\) to \(\mach^2\) and the computed value is as accurate
as if the computations were done in twice the working precision. For example,
the compensated de Casteljau algorithm (\texttt{CompDeCasteljau}) satisfies
\begin{equation}\label{de-casteljau-2-error}
  \frac{\left|p(s) - \mathtt{CompDeCasteljau}(p, s)\right|}{
    \left|p(s)\right|} \leq \mach + \cond{p, s} \times
    \bigO{\mach^2}.
\end{equation}
For problems with \(\cond{p, s} < 1 / \mach^2\), the relative error
is \(\mach\), i.e. accurate to full precision, aside from rounding to the
nearest floating point number. Figure~\ref{fig:jlcs-10} shows this shift
in relative error from \texttt{DeCasteljau} to \texttt{CompDeCasteljau}.

\begin{figure}
  \includegraphics[width=0.8125\textwidth]{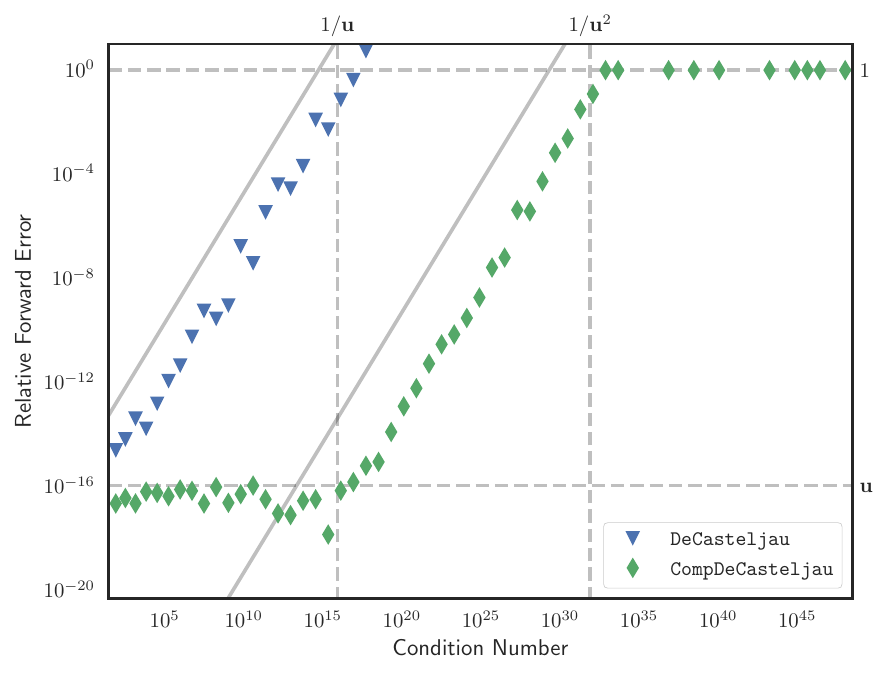}
  \centering
  \captionsetup{width=.75\linewidth}
  \caption{Evaluation of \(p(s) = (s - 1)\left(s - 3/4\right)^7\)
    represented in Bernstein form.}
  \label{fig:jlcs-10}
\end{figure}

In \cite{Graillat2009}, the authors generalized the compensated Horner's
algorithm to produce a method for evaluating a polynomial as if
the computations were done in \(K\) times the working precision for
any \(K \geq 2\). This result motivates this paper, though the
approach there is somewhat different than ours. They perform each computation
with error-free transformations and interpret the errors as coefficients of new
polynomials. They then evaluate the error polynomials, which (recursively)
generate second order error polynomials and so on. This recursive property
causes the number of operations to grow exponentially in \(K\). Here, we
instead have a fixed number of error groups, each corresponding to round-off
from the group above it. For example, when
\((1 - s) b_j^{(n)} + s b_{j + 1}^{(n)}\) is computed in floating point, any
error is filtered down to the error group below it.

As in~\eqref{de-casteljau-error}, the accuracy of the compensated
result~\eqref{de-casteljau-2-error} may be arbitrarily bad for ill-conditioned
polynomial evaluations. For example, as the condition number grows in
Figure~\ref{fig:jlcs-10}, some points have relative error exactly equal to
\(1\); this indicates that \(\mathtt{CompDeCasteljau}(p, s) = 0\), which is
a complete failure to evaluate the order of magnitude of \(p(s)\). For
root-finding problems \(\mathtt{CompDeCasteljau}(p, s) = 0\) when
\(p(s) \neq 0\) can cause premature convergence and incorrect results.
We describe how to defer rounding into progressively
smaller error groups and improve the accuracy of the computed result by a
factor of \(\mach\) for every error group added. So we derive
\texttt{CompDeCasteljauK}, a \(K\)-fold compensated de Casteljau algorithm
that satisfies the following a priori bound for any arbitrary integer \(K\):
\begin{equation}
  \frac{\left|p(s) - \mathtt{CompDeCasteljauK}(p, s, K)\right|}{
    \left|p(s)\right|} \leq \mach + \cond{p, s} \times
    \bigO{\mach^K}.
\end{equation}
This means that the computed value with \texttt{CompDeCasteljauK} is now
as accurate as the result of the de Casteljau algorithm performed in
\(K\) times the working precision with a final rounding back to the
working precision.

The paper is organized as follows. Section~\ref{sec:notation} establishes
notation for error analysis with floating point operations, reviews
results about error-free transformations and reviews the
de Casteljau algorithm. In Section~\ref{sec:compensated-2},
the compensated algorithm for polynomial evaluation from \cite{Jiang2010} is
reviewed and notation is established for the expansion. In
Section~\ref{sec:compensated-k}, the \(K\)-compensated algorithm is provided
and a forward error analysis is performed. Finally, in
Section~\ref{sec:numerical} we perform two numerical experiments to
give practical examples of the theoretical error bounds.

\section{Basic notation and results}\label{sec:notation}

\subsection{Floating Point and Forward Error Analysis}

We assume all floating point operations obey
\begin{equation}
  a \star b = \fl{a \circ b} = (a \circ b)(1 + \delta_1) =
  (a \circ b) / (1 + \delta_2)
\end{equation}
where \(\star \in \left\{\oplus, \ominus, \otimes, \oslash\right\}\), \(\circ
\in \left\{+, -, \times, \div\right\}\) and \(\left|\delta_1\right|,
\left|\delta_2\right| \leq \mach\). The symbol \(\mach\) is the unit round-off
and \(\star\) is a floating point operation, e.g.
\(a \oplus b = \fl{a + b}\). (For IEEE-754 floating point double precision,
\(\mach = 2^{-53}\).) We denote the computed result of
\(\alpha \in \mathbf{R}\) in floating point arithmetic by
\(\widehat{\alpha}\) or \(\fl{\alpha}\) and use \(\mathbf{F}\) as the set of
all floating point numbers (see \cite{Higham2002} for more details).
Following \cite{Higham2002}, we will use the following classic properties in
error analysis.

\begin{enumerate}
  \item If \(\delta_i \leq \mach\), \(\rho_i = \pm 1\), then
      \(\prod_{i = 1}^n (1 + \delta_i)^{\rho_i} = 1 + \theta_n\),
  \item \(\left|\theta_n\right| \leq \gamma_n \coloneqq
      n \mach / (1 - n \mach)\),
  \item \((1 + \theta_k)(1 + \theta_j) = 1 + \theta_{k + j}\),
  \item \(\gamma_k + \gamma_j + \gamma_k \gamma_j \leq \gamma_{k + j}
    \Longleftrightarrow (1 + \gamma_k)(1 + \gamma_j) \leq 1 + \gamma_{k + j}\),
  \item \((1 + \mach)^j \leq 1 / (1 - j \mach) \Longleftrightarrow
  (1 + \mach)^j - 1 \leq \gamma_j\).
\end{enumerate}

\subsection{Error-Free Transformation}

An error-free transformation is a computational method where both
the computed result and the round-off error are returned. It
is considered ``free'' of error if the round-off can be represented
exactly as an element or elements of \(\mathbf{F}\).
The error-free transformations used in this paper are
the \texttt{TwoSum} algorithm by Knuth (\cite{Knuth1969}) and
\texttt{TwoProd} algorithm by Dekker (\cite{Dekker1971}, Section 5),
respectively.

\begin{theorem}[\cite{Ogita2005}, Theorem 3.4]\label{thm:eft}
For \(a, b \in \mathbf{F}\) and \(P, \pi, S, \sigma \in \mathbf{F}\),
\texttt{TwoSum} and \texttt{TwoProd} satisfy
\begin{alignat}{4}
\left[S, \sigma\right] &= \mathtt{TwoSum}(a, b), & \, S &= \fl{a + b},
  S + \sigma &= a + b, \sigma &\leq \mach \left|S\right|,
  & \, \sigma &\leq \mach \left|a + b\right| \\
\left[P, \pi\right] &= \mathtt{TwoProd}(a, b),
  & \, P &= \fl{a \times b}, P + \pi &= a \times b,
  \pi &\leq \mach \left|P\right|,
  & \, \pi &\leq \mach \left|a \times b\right|.
\end{alignat}
The letters \(\sigma\) and \(\pi\) are used to indicate that the
errors came from sum and product, respectively. See
Appendix~\ref{sec:appendix-algo} for implementation details.
\end{theorem}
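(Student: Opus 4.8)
The plan is to treat the two transformations separately; in each case one first checks the ``exactness'' claims (that the returned high part is the correctly-rounded result and that the returned low part recovers the exact value) and then reads off the two round-off bounds from the standard model of floating point arithmetic.

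For \texttt{TwoSum}, the identity \(S = \fl{a + b}\) is simply the definition of the first operation. The substantive point is that the low part \(\sigma\) produced by the remaining (purely additive/subtractive) steps equals \((a + b) - S\) \emph{exactly}, with no further rounding. I would establish this by Knuth's classical case analysis \cite{Knuth1969} on the relative magnitudes of \(a\) and \(b\): after reducing to the case \(\left|a\right| \geq \left|b\right|\), each of the recovery subtractions is shown to be performed exactly, either because it is a Sterbenz-type difference of two floating point numbers lying within a factor of two of one another, or because its exact value already lies in \(\mathbf{F}\). Granting \(\sigma = (a + b) - S\) exactly, the two bounds follow from the two equivalent forms of the rounding model: \(S = (a + b)(1 + \delta_1)\) gives \(\left|\sigma\right| = \left|a + b\right|\left|\delta_1\right| \leq \mach\left|a + b\right|\), while \(a + b = S(1 + \delta_2)\) gives \(\left|\sigma\right| = \left|S\right|\left|\delta_2\right| \leq \mach\left|S\right|\).

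For \texttt{TwoProd}, again \(P = \fl{a \times b}\) holds by definition. The error-free claim is that Dekker's Veltkamp splitting of \(a\) and \(b\) into high and low parts, each carrying at most half of the available mantissa bits, is itself exact, so that the exact product \(a \times b\) expands as a sum of four exactly representable partial products whose prescribed floating point combination reconstructs \((a \times b) - P\) without loss; hence \(\pi = (a \times b) - P\) exactly. I would verify this via the standard bit-counting argument for Veltkamp splitting and Dekker's product (alternatively, one may simply cite \cite{Dekker1971} or \cite{Ogita2005}). The two bounds on \(\pi\) then follow exactly as for \texttt{TwoSum}, from \(P = (a \times b)(1 + \delta_1)\) and \(a \times b = P(1 + \delta_2)\).

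The only real obstacle is the exactness of the recovery steps --- Knuth's case analysis for \texttt{TwoSum} and the Veltkamp/Dekker mantissa-counting argument for \texttt{TwoProd} --- since these are precisely the claims that do not follow from the abstract floating point model alone but depend on the radix-2, round-to-nearest structure of \(\mathbf{F}\). Once ``error-free'' is in hand, the stated inequalities are routine.
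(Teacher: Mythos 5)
Your proposal is correct and is essentially the argument the paper relies on: the paper does not prove this theorem itself but imports it from \cite{Ogita2005} (with the underlying exactness results due to \cite{Knuth1969} for \texttt{TwoSum} and \cite{Dekker1971} for the Veltkamp splitting and \texttt{TwoProd}), supplying only the algorithm listings in Appendix~\ref{sec:appendix-algo}. Your identification of the exactness of the recovery steps as the only non-model-based content, and your derivation of the two bounds from the two forms \(S = (a+b)(1+\delta_1) = (a+b)/(1+\delta_2)\), match that classical treatment; the only caveat worth recording is that the exactness of \texttt{TwoProd} additionally requires that no underflow occurs, an assumption the paper makes explicitly in its main theorems.
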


\subsection{de Casteljau Algorithm}

Next, we recall\footnote{We have used slightly non-standard notation for the
terms produced by the de Casteljau algorithm: we start the superscript at
\(n\) and count down to \(0\) as is typically done when describing Horner's
algorithm. For example, we use \(b_j^{(n - 2)}\) instead of
\(b_j^{(2)}\).} the de Casteljau algorithm:

\begin{breakablealgorithm}
  \caption{\textit{de Casteljau algorithm for polynomial evaluation.}}
  \label{alg:de-casteljau}

  \begin{algorithmic}
    \Function{\(\mathtt{result} = \mathtt{DeCasteljau}\)}{$b, s$}
      \State \(n = \texttt{length}(b) - 1\)
      \State \(\widehat{r} = 1 \ominus s\)
      \\
      \For{\(j = 0, \ldots, n\)}
        \State \(\widehat{b}_j^{(n)} = b_j\)
      \EndFor
      \\
      \For{\(k = n - 1, \ldots, 0\)}
        \For{\(j = 0, \ldots, k\)}
          \State \(\widehat{b}_j^{(k)} = \left(
              \widehat{r} \otimes \widehat{b}_j^{(k + 1)}\right) \oplus
              \left(s \otimes \widehat{b}_{j + 1}^{(k + 1)}\right)\)
        \EndFor
      \EndFor
      \\
      \State \(\mathtt{result} = \widehat{b}_0^{(0)}\)
    \EndFunction
  \end{algorithmic}
\end{breakablealgorithm}

\begin{theorem}[\cite{Mainar1999}, Corollary 3.2]
If \(p(s) = \sum_{j = 0}^n b_j B_{j, n}(s)\) and \(\mathtt{DeCasteljau}(p, s)\)
is the value computed by the de Casteljau algorithm then\footnote{In the
original paper the factor on \(\widetilde{p}(s)\) is \(\gamma_{2n}\),
but the authors did not consider round-off when computing
\(1 \ominus s\).}
\begin{equation}
\left|p(s) - \mathtt{DeCasteljau}(p, s)\right| \leq \gamma_{3n}
\sum_{j = 0}^n \left|b_j\right| B_{j, n}(s).
\end{equation}
\end{theorem}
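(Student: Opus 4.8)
The plan is to run a forward error analysis through Algorithm~\ref{alg:de-casteljau}, propagating a single per-node bound up the de Casteljau triangle; throughout I work with $s\in[0,1]$, the usual setting, so that every Bernstein function $B_{i,m}(s)$ is nonnegative and $1-s\ge 0$. First I would record the exact recurrence together with an auxiliary majorant. Writing $b_j^{(k)}$ for the \emph{exact} intermediates, i.e.\ $b_j^{(n)} = b_j$ and $b_j^{(k)} = (1-s)b_j^{(k+1)} + s\,b_{j+1}^{(k+1)}$, an induction on decreasing $k$ using the Bernstein recurrence $B_{i,m}(s) = (1-s)B_{i,m-1}(s) + s\,B_{i-1,m-1}(s)$ shows $b_j^{(k)} = \sum_{i=0}^{n-k} b_{j+i}B_{i,n-k}(s)$, and in particular $b_0^{(0)} = p(s)$. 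I would also introduce the majorant $\beta_j^{(k)} \coloneqq \sum_{i=0}^{n-k} \lvert b_{j+i}\rvert B_{i,n-k}(s)$, which satisfies the same recurrence $\beta_j^{(k)} = (1-s)\beta_j^{(k+1)} + s\,\beta_{j+1}^{(k+1)}$ and dominates $\lvert b_j^{(k)}\rvert$.

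The core is the claim, proved by induction on $k = n, n-1, \dots, 0$, that the computed quantities obey
\[
  \bigl\lvert \widehat b_j^{(k)} - b_j^{(k)} \bigr\rvert \;\le\; \gamma_{3(n-k)}\, \beta_j^{(k)}, \qquad 0 \le j \le k .
\]
The base case $k = n$ is the equality $0 = 0$. For the inductive step, write $\widehat r = 1 \ominus s = (1-s)(1+\delta_0)$ with $\lvert\delta_0\rvert\le\mach$ and unfold one line of the algorithm as $\widehat b_j^{(k)} = (1-s)\,g_L\,\widehat b_j^{(k+1)} + s\,g_R\,\widehat b_{j+1}^{(k+1)}$, where $g_L = (1+\delta_0)(1+\epsilon_1)(1+\epsilon_3)$ and $g_R = (1+\epsilon_2)(1+\epsilon_3)$ collect the two products and the sum ($\lvert\epsilon_1\rvert,\lvert\epsilon_2\rvert,\lvert\epsilon_3\rvert\le\mach$), so $\lvert g_L\rvert\le(1+\mach)^3$, $\lvert g_L-1\rvert\le\gamma_3$, $\lvert g_R\rvert\le(1+\mach)^2$, $\lvert g_R-1\rvert\le\gamma_2$. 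Subtracting $b_j^{(k)} = (1-s)b_j^{(k+1)} + s\,b_{j+1}^{(k+1)}$, writing each term as $g\widehat x - x = g(\widehat x - x) + (g-1)x$, bounding $\lvert b_j^{(k+1)}\rvert\le\beta_j^{(k+1)}$ and $\lvert b_{j+1}^{(k+1)}\rvert\le\beta_{j+1}^{(k+1)}$, invoking the inductive hypothesis, and using $1-s,\,s\ge 0$ to pull the scalars out of the absolute values, one gets
\[
  \bigl\lvert \widehat b_j^{(k)} - b_j^{(k)}\bigr\rvert \le (1-s)\bigl[(1+\mach)^3\gamma_{3(n-k-1)} + \gamma_3\bigr]\beta_j^{(k+1)} + s\bigl[(1+\mach)^2\gamma_{3(n-k-1)} + \gamma_2\bigr]\beta_{j+1}^{(k+1)} .
\]
Properties~4--5 (e.g.\ $(1+\mach)^3\le 1+\gamma_3$ and $\gamma_a+\gamma_b+\gamma_a\gamma_b\le\gamma_{a+b}$) collapse each bracket to $\gamma_{3(n-k)}$, and $(1-s)\beta_j^{(k+1)} + s\,\beta_{j+1}^{(k+1)} = \beta_j^{(k)}$ recombines the two pieces, which closes the induction.

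Taking $k = j = 0$ then yields $\lvert p(s) - \mathtt{DeCasteljau}(p,s)\rvert = \lvert b_0^{(0)} - \widehat b_0^{(0)}\rvert \le \gamma_{3n}\,\beta_0^{(0)} = \gamma_{3n}\sum_{j=0}^n\lvert b_j\rvert B_{j,n}(s)$, which is the assertion. I expect the only delicate point to be the bookkeeping of the fresh rounding factors in the inductive step: one must check that each node contributes exactly three of them along the $\widehat r$-branch (the pair from the left product and the sum, plus the extra $\delta_0$ from $1\ominus s$) and exactly two along the $s$-branch, so that the index on $\gamma$ grows by exactly $3$ per level, not more. It is precisely this previously-neglected $\delta_0$ that turns the classical $\gamma_{2n}$ into $\gamma_{3n}$, as the footnote remarks; everything else is the routine $\gamma$-calculus of properties~1--5, and $s\in[0,1]$ is used only to keep the majorants $\beta_j^{(k)}$ nonnegative.
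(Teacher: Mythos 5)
Your proof is correct. The paper does not actually prove this statement --- it imports it from \cite{Mainar1999} (Corollary 3.2), adjusting $\gamma_{2n}$ to $\gamma_{3n}$ in the footnote to account for the rounding in $1 \ominus s$ --- and your per-node induction with the majorants $\beta_j^{(k)}$ is exactly the standard argument behind that cited result, with the extra $\delta_0$ from $\widehat r = (1-s)(1+\delta_0)$ correctly producing three rounding factors per level on the $\widehat r$-branch and hence the index $3n$. The only caveat worth recording is that $s \in [0,1]$ must be taken as a standing hypothesis (as the paper does implicitly via $B_{j,n}(s) \ge 0$), since your identity $(1-s)\beta_j^{(k+1)} + s\,\beta_{j+1}^{(k+1)} = \beta_j^{(k)}$ and the step of pulling $1-s$ and $s$ out of absolute values both rely on it.
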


The relative condition number of the evaluation of \(p(s) = \sum_{j = 0}^n
b_j B_{j, n}(s)\) in Bernstein form used in this paper is (see
\cite{Mainar1999}, \cite{Farouki1987}):
\begin{equation}
\cond{p, s} = \frac{\widetilde{p}(s)}{\left|p(s)\right|},
\end{equation}
where \(B_{j, n}(s) = \binom{n}{j} (1 - s)^{n - j} s^j \geq 0\) and
\(\widetilde{p}(s) \coloneqq \sum_{j = 0}^n \left|b_j\right| B_{j, n}(s)\).

To be able to express the algorithm in matrix form, we define
the vectors
\begin{equation}
b^{(k)} = \left[\begin{array}{c c c} b_0^{(k)} & \cdots &
b_k^{(k)}\end{array}\right]^T, \quad
\widehat{b}^{(k)} = \left[\begin{array}{c c c} \widehat{b}_0^{(k)} & \cdots &
    \widehat{b}_k^{(k)}\end{array}\right]^T
\end{equation}
and the reduction matrices:
\begin{equation}
U_k = U_k(s) = \left[\begin{array}{c c c c c c}
    1 - s  & s      & 0      & \cdots & \cdots & 0      \\
    0      & 1 - s  & s      & \ddots &        & \vdots \\
    \vdots & \ddots & \ddots & \ddots & \ddots & \vdots \\
    \vdots &        & \ddots & \ddots & \ddots & 0 \\
    0      & \cdots & \cdots & 0      & 1 - s  & s
\end{array}\right] \in \mathbf{R}^{k \times (k + 1)}.
\end{equation}
With this, we can express (\cite{Mainar1999}) the de Casteljau algorithm as
\begin{equation}\label{matrix-de-casteljau}
b^{(k)} = U_{k + 1} b^{(k + 1)}
\Longrightarrow b^{(0)} = U_1 \cdots U_n b^{(n)}.
\end{equation}

In general, for a sequence \(v_0, \ldots, v_n\) we'll refer to \(v\)
as the vector containing all of the values:
\(v = \left[\begin{array}{c c c} v_0 & \cdots &
    v_n\end{array}\right]^T.\)

\section{Compensated de Casteljau}\label{sec:compensated-2}

In this section we review the compensated de Casteljau algorithm
from \cite{Jiang2010}. In order to track the local errors at
each update step, we use four EFTs:
\begin{align}
\left[\widehat{r}, \rho\right] &= \mathtt{TwoSum}(1, -s) \\
\left[P_1, \pi_1\right] &= \mathtt{TwoProd}\left(
    \widehat{r}, \widehat{b}_j^{(k + 1)}\right) \\
\left[P_2, \pi_2\right] &= \mathtt{TwoProd}\left(
    s, \widehat{b}_{j + 1}^{(k + 1)}\right) \\
\left[\widehat{b}_j^{(k)}, \sigma_3\right] &= \mathtt{TwoSum}(P_1, P_2)
\end{align}
With these, we can exactly describe the local error between the exact
update and computed update:
\begin{gather}
\ell_{1, j}^{(k)} = \pi_1 + \pi_2 + \sigma_3 + \rho \cdot
  \widehat{b}_j^{(k + 1)} \label{ell-j} \\
(1 - s) \cdot \widehat{b}_j^{(k + 1)} +
  s \cdot \widehat{b}_{j + 1}^{(k + 1)} =
\widehat{b}_j^{(k)} + \ell_{1, j}^{(k)}.
\end{gather}

\begin{figure}
  \includegraphics[width=0.375\textwidth]{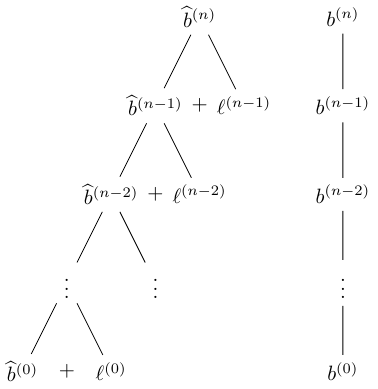}
  \centering
  \captionsetup{width=.75\linewidth}
  \caption{Local round-off errors}
  \label{fig:loc-err-accumulate}
\end{figure}

\noindent By defining the global errors at each step
\begin{equation}
  \db{1}_j^{(k)} = b_j^{(k)} - \widehat{b}_j^{(k)}
\end{equation}
we can see (Figure~\ref{fig:loc-err-accumulate}) that the local errors
accumulate in
\(\db{1}^{(k)}\):
\begin{equation}\label{err-update}
  \db{1}_j^{(k)} = (1 - s) \cdot \db{1}_j^{(k + 1)} + s \cdot
  \db{1}_{j + 1}^{(k + 1)} + \ell_{1, j}^{(k)}.
\end{equation}
When computed in exact arithmetic
\begin{equation}
  p(s) = \widehat{b}_0^{(0)} + \db{1}_0^{(0)}
\end{equation}
and by using \eqref{err-update}, we can continue to compute
approximations of \(\db{1}_j^{(k)}\). The idea behind the compensated
de Casteljau algorithm is to compute both the local error and the updates
of the global error with floating point operations:

\begin{breakablealgorithm}
  \caption{\textit{Compensated de Casteljau algorithm for polynomial evaluation.}}
  \label{alg:comp-de-casteljau}

  \begin{algorithmic}
    \Function{\(\mathtt{result} = \mathtt{CompDeCasteljau}\)}{$b, s$}
      \State \(n = \texttt{length}(b) - 1\)
      \State \(\left[\widehat{r}, \rho\right] = \mathtt{TwoSum}(1, -s)\)
      \\
      \For{\(j = 0, \ldots, n\)}
        \State \(\widehat{b}_j^{(n)} = b_j\)
        \State \(\cdb{1}_j^{(n)} = 0\)
      \EndFor
      \\
      \For{\(k = n - 1, \ldots, 0\)}
        \For{\(j = 0, \ldots, k\)}
          \State \(\left[P_1, \pi_1\right] = \mathtt{TwoProd}\left(
              \widehat{r}, \widehat{b}_j^{(k + 1)}\right)\)
          \State \(\left[P_2, \pi_2\right] = \mathtt{TwoProd}\left(
              s, \widehat{b}_{j + 1}^{(k + 1)}\right)\)
          \State \(\left[\widehat{b}_j^{(k)}, \sigma_3\right] =
              \mathtt{TwoSum}(P_1, P_2)\)
          \State \(\widehat{\ell}_{1, j}^{(k)} = \pi_1 \oplus \pi_2 \oplus
              \sigma_3 \oplus \left(\rho \otimes
              \widehat{b}_j^{(k + 1)}\right)\)
          \State \(\cdb{1}_j^{(k)} =
              \widehat{\ell}_{1, j}^{(k)} \oplus
              \left(s \otimes \cdb{1}_{j + 1}^{(k + 1)}
              \right) \oplus
              \left(\widehat{r} \otimes
              \cdb{1}_j^{(k + 1)}\right)\)
        \EndFor
      \EndFor
      \\
      \State \(\mathtt{result} = \widehat{b}_0^{(0)} \oplus
          \cdb{1}_0^{(0)}\)
    \EndFunction
  \end{algorithmic}
\end{breakablealgorithm}

\noindent  When comparing this computed error to the exact error, the
difference depends only on \(s\) and the Bernstein
coefficients of \(p\). Using a bound (Lemma~\ref{lemma:db-lemma}) on the
round-off error when computing \(\db{1}^{(0)}\), the algorithm can
be shown to be as accurate as if the computations were done in twice
the working precision:

\begin{theorem}[\cite{Jiang2010}, Theorem 5]
  If no underflow occurs, \(n \geq 2\) and \(s \in \left[0, 1\right]\)
  \begin{equation}
    \frac{\left|p(s) - \mathtt{CompDeCasteljau}(p, s)\right|}{
      \left|p(s)\right|} \leq \mach + 2 \gamma_{3n}^2 \cond{p, s}.
  \end{equation}
\end{theorem}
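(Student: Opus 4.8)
The plan is to split the overall error into the rounding of the final sum $\widehat{b}_0^{(0)} \oplus \cdb{1}_0^{(0)}$ and the discrepancy between the exact global error $\db{1}_0^{(0)}$ and its computed surrogate $\cdb{1}_0^{(0)}$, and to control the latter with Lemma~\ref{lemma:db-lemma}. First I would isolate the last operation: the algorithm returns $\widehat{b}_0^{(0)} \oplus \cdb{1}_0^{(0)} = \bigl(\widehat{b}_0^{(0)} + \cdb{1}_0^{(0)}\bigr)(1 + \delta)$ with $|\delta| \le \mach$, while in exact arithmetic $p(s) = \widehat{b}_0^{(0)} + \db{1}_0^{(0)}$. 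Subtracting and using $\widehat{b}_0^{(0)} + \cdb{1}_0^{(0)} = p(s) - \bigl(\db{1}_0^{(0)} - \cdb{1}_0^{(0)}\bigr)$ gives
\[
\bigl| p(s) - \mathtt{CompDeCasteljau}(p, s) \bigr| \;\le\; (1 + \mach)\,\bigl| \db{1}_0^{(0)} - \cdb{1}_0^{(0)} \bigr| \;+\; \mach\,\bigl| p(s) \bigr|.
\]
Dividing by $|p(s)|$, the second term supplies the $\mach$ summand, so the theorem reduces to the estimate $\bigl| \db{1}_0^{(0)} - \cdb{1}_0^{(0)} \bigr| \le 2\gamma_{3n}^2\,\widetilde{p}(s)$, after which one invokes $\cond{p,s} = \widetilde{p}(s)/|p(s)|$ and folds the harmless $(1+\mach)$ factor into the higher-order term.

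That estimate is Lemma~\ref{lemma:db-lemma}, which I would prove by a two-level accumulation. One factor $\gamma_{3n}$ records the size of the quantity being approximated: the iterates $\widehat{b}_j^{(k)}$ produced here are exactly the plain de Casteljau iterates (the \texttt{TwoSum} and \texttt{TwoProd} calls return the same computed values), so the de Casteljau error theorem applies verbatim and yields $|\db{1}_0^{(0)}| = |p(s) - \widehat{b}_0^{(0)}| \le \gamma_{3n}\widetilde{p}(s)$. The second factor $\gamma_{3n}$ records the relative accuracy with which the correction is then computed. The exact global error obeys the two-term recurrence~\eqref{err-update} driven by the local errors $\ell_{1,j}^{(k)}$, whereas $\cdb{1}_j^{(k)}$ runs that same recurrence in floating point driven instead by the computed local errors $\widehat{\ell}_{1,j}^{(k)}$; hence $\db{1}_0^{(0)} - \cdb{1}_0^{(0)}$ decomposes into (i) the exact propagation of the per-step discrepancies $\ell_{1,j}^{(k)} - \widehat{\ell}_{1,j}^{(k)}$ and (ii) the round-off incurred while propagating the $\widehat{\ell}_{1,j}^{(k)}$ through the floating-point recurrence.

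For (i), since $\widehat{\ell}_{1,j}^{(k)}$ is assembled from $\pi_1 + \pi_2 + \sigma_3 + \rho\,\widehat{b}_j^{(k+1)}$ with a bounded number of flops, the Theorem~\ref{thm:eft} inequalities $|\pi_i| \le \mach|P_i|$, $|\sigma_3| \le \mach|\widehat{b}_j^{(k)}|$, $|\rho| \le \mach$ make both $|\widehat{\ell}_{1,j}^{(k)}|$ and $|\ell_{1,j}^{(k)} - \widehat{\ell}_{1,j}^{(k)}|$ equal to $\bigO{\mach}$ times $(1-s)|\widehat{b}_j^{(k+1)}| + s\,|\widehat{b}_{j+1}^{(k+1)}|$; for (ii), each step of the $\cdb{1}$ recurrence injects a $\gamma$-type relative error into a nonnegative combination of $|\widehat{b}|$'s. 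Summing over $k = n-1, \ldots, 0$ and $j$ with the appropriate Bernstein weights, both (i) and (ii) are bounded by $\gamma_{3n}$ times a weighted sum of the quantities $(1-s)|\widehat{b}_j^{(k+1)}| + s\,|\widehat{b}_{j+1}^{(k+1)}|$, and that weighted sum telescopes — via the reduction $b^{(k)} = U_{k+1}b^{(k+1)}$, the nonnegativity $B_{j,n}(s) \ge 0$ on $[0,1]$, and bounds of the form $\sum_j |\widehat{b}_j^{(k)}| B_{j,k}(s) \le \bigl(1 + \bigO{\gamma_{3n}}\bigr)\widetilde{p}(s)$ — back to $\bigl(1 + \bigO{\gamma_{3n}}\bigr)\widetilde{p}(s)$. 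Multiplying the two $\gamma_{3n}$ factors and adding the two contributions produces the constant $2$.

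The main obstacle is precisely this bookkeeping: one must check that forming the corrections $\widehat{\ell}$ and then carrying them each contribute only a relative error of order $\gamma_{3n}$, so that their combined effect on $\db{1}_0^{(0)} - \cdb{1}_0^{(0)}$ is $\bigO{\gamma_{3n}^2}\widetilde{p}(s)$ and no larger, and one must verify that the running weighted sum $\sum_j |\widehat{b}_j^{(k)}| B_{j,k}(s)$ never exceeds $\bigl(1 + \bigO{\gamma_{3n}}\bigr)\widetilde{p}(s)$ as $k$ descends from $n$ to $0$ — which is exactly where the hypotheses $n \ge 2$ and $s \in [0,1]$ enter, through nonnegativity of the Bernstein basis and monotonicity of the reduction $U_{k+1}$. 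Once Lemma~\ref{lemma:db-lemma} is in hand, the remainder is routine: divide the displayed inequality by $|p(s)|$, rewrite $\widetilde{p}(s)/|p(s)|$ as $\cond{p,s}$, and absorb the leftover higher-order terms into the factor $2$.
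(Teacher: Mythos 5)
The paper does not actually prove this theorem — it imports it from Jiang et al. — but the route it indicates (control the final rounding in \(\widehat{b}_0^{(0)} \oplus \cdb{1}_0^{(0)}\) and invoke Lemma~\ref{lemma:db-lemma} for \(\left|\db{2}_0^{(0)}\right|\)) is exactly your decomposition, and your sketch of the lemma itself mirrors the paper's general \(K\)-fold machinery (Lemmas~\ref{lemma:ell-tilde}--\ref{lemma:k-order}) specialized to \(K = 2\). The one step you gloss over is that Lemma~\ref{lemma:db-lemma} delivers the constant \(2\gamma_{3n + 2}\gamma_{3(n - 1)}\) rather than \(2\gamma_{3n}^2\), so to land on the stated bound you must still verify \((1 + \mach)\,\gamma_{3n + 2}\,\gamma_{3(n - 1)} \leq \gamma_{3n}^2\), which does hold for \(n \geq 2\) because \((3n + 2)(3n - 3) < (3n)^2\) leaves enough slack to absorb the extra \((1 + \mach)\).
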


\begin{figure}
  \includegraphics[width=0.9375\textwidth]{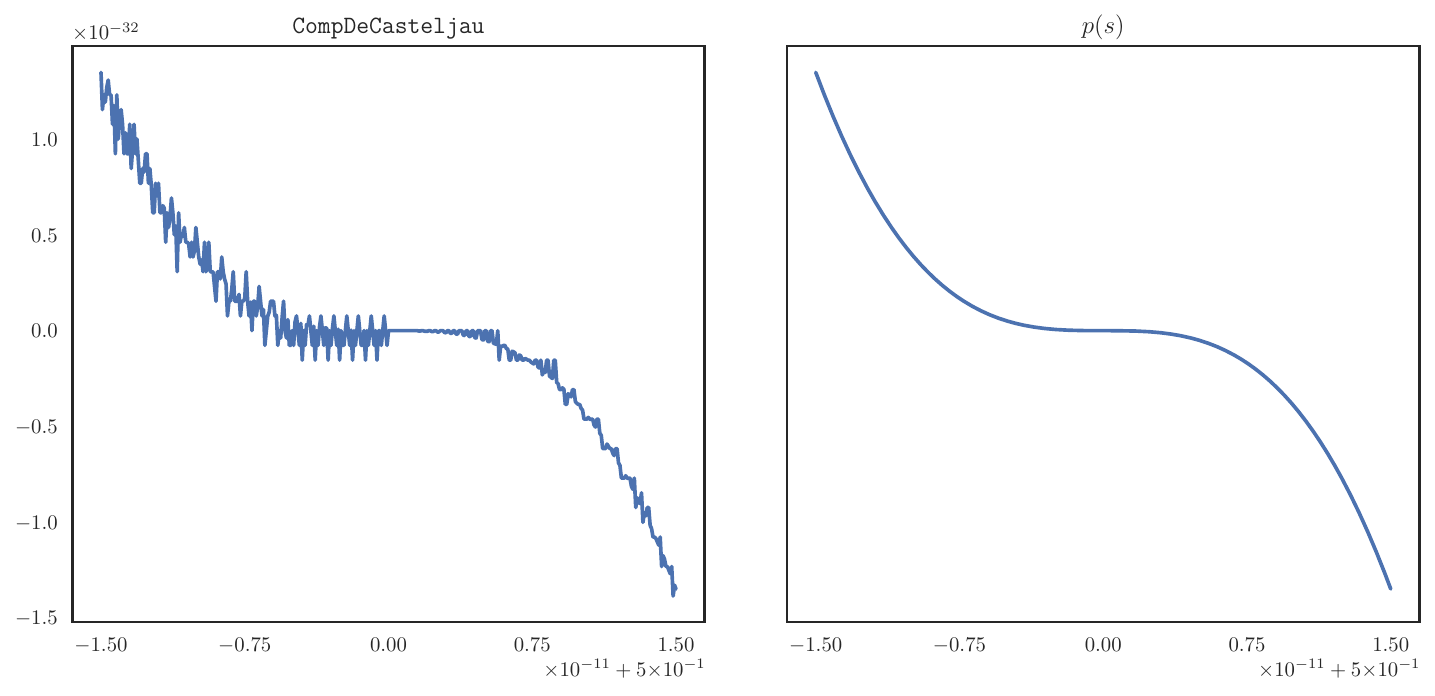}
  \centering
  \captionsetup{width=.75\linewidth}
  \caption{The compensated de Casteljau method starts to lose accuracy
    for \(p(s) = (2s - 1)^3 (s - 1)\) in the neighborhood of its
    multiple root \(1/2\).}
  \label{fig:compensated-insufficient}
\end{figure}

Unfortunately, Figure~\ref{fig:compensated-insufficient} shows how
\texttt{CompDeCasteljau} starts to break down in a region of
high condition number (caused by a multiple root with multiplicity
higher than two). For example, the point
\(s = \frac{1}{2} + 1001\mach\)
--- which is in the plotted region \(\left|s - \frac{1}{2}\right|
\leq \frac{3}{2} \cdot 10^{-11}\) --- evaluates to exactly \(0\) when
it should be \(\bigO{\mach^3}\). As shown in
Table~\ref{tab:exact-computation}, the breakdown occurs because
\(\widehat{b}_0^{(0)} = -\cdb{1}_0^{(0)} = \mach / 16\).

\begin{table}
  \centering
  \begin{adjustbox}{max width=\textwidth}
  \begin{tabular}{>{$}c<{$} >{$}c<{$} >{$}c<{$} >{$}c<{$} >{$}c<{$} >{$}c<{$}}
    \toprule
    k & j & \widehat{b}_j^{(k)} & \cdb{1}_j^{(k)} & \db{1}_j^{(k)} - \cdb{1}_j^{(k)} \\
    \midrule
    3 & 0 & 0.125 - 1.75 (1001 \mach) - 0.25 \mach & 0.25\mach & 0 \\
    3 & 1 & -0.125 + 1.25(1001 \mach) + 0.25 \mach & -0.25\mach & 0 \\
    3 & 2 & 0.125 - 0.75 (1001 \mach) & 0 & 0 \\
    3 & 3 & -0.125 + 0.25 (1001 \mach) & 0 & 0 \\
    \midrule
    2 & 0 & -0.5 (1001 \mach) & 3 (1001 \mach)^2 & 0 \\
    2 & 1 & 0.5(1001 \mach) + 0.125 \mach & -0.125\mach - 2 (1001 \mach)^2 & 0 \\
    2 & 2 & -0.5 (1001 \mach) & (1001 \mach)^2 & 0 \\
    \midrule
    1 & 0 & 0.0625\mach + (1001 \mach)^2 + 239\mach^2 & -0.0625\mach + 0.5  (1001 \mach)^2 - 239 \mach^2 & -5 (1001\mach)^3 \\
    1 & 1 & 0.0625\mach - (1001 \mach)^2 - 239\mach^2 & -0.0625\mach - 0.5  (1001 \mach)^2 + 239 \mach^2 & 3 (1001\mach)^3 \\
    \midrule
    0 & 0 & 0.0625 \mach & -0.0625 \mach & -4 (1001 \mach)^3 + 8 (1001 \mach)^4 \\
    \bottomrule
  \end{tabular}
  \end{adjustbox}
  \captionsetup{width=.75\linewidth}
  \caption{Terms computed by \texttt{CompDeCasteljau} when evaluating \\
    \(p(s) = (2s - 1)^3 (s - 1)\) at the point
    \(s = \frac{1}{2} + 1001 \mach\)}
  \label{tab:exact-computation}
\end{table}

\section{\texorpdfstring{\(K\)}{K}-Compensated de Casteljau}\label{sec:compensated-k}

\subsection{Algorithm Specified}

In order to raise from twice the working precision to \(K\) times the
working precision, we continue using EFTs when computing
\(\cdb{1}^{(k)}\). By tracking the round-off from each
floating point evaluation via an EFT, we can form a cascade of global errors:
\begin{align}
  b_j^{(k)} &= \widehat{b}_j^{(k)} + \db{1}_j^{(k)} \\
  \db{1}_j^{(k)} &= \cdb{1}_j^{(k)} + \db{2}_j^{(k)} \\
  \db{2}_j^{(k)} &= \cdb{2}_j^{(k)} +
  \db{3}_j^{(k)} \\
  &\mathrel{\makebox[\widthof{=}]{\vdots}} \nonumber
\end{align}
In the same way local error can be tracked when updating
\(\widehat{b}_j^{(k)}\), it can be tracked for updates that happen down
the cascade:
\begin{alignat}{4}
  (1 - s) \cdot \widehat{b}_j^{(k + 1)} &+
  s \cdot \widehat{b}_{j + 1}^{(k + 1)} &&  &&=
  \widehat{b}_j^{(k)} &&+ \ell_{1, j}^{(k)} \\
  (1 - s) \cdot \cdb{1}_j^{(k + 1)} &+
  s \cdot \cdb{1}_{j + 1}^{(k + 1)} &&+ \ell_{1, j}^{(k)} &&=
  \cdb{1}_j^{(k)} &&+ \ell_{2, j}^{(k)} \\
  (1 - s) \cdot \cdb{2}_j^{(k + 1)} &+
  s \cdot \cdb{2}_{j + 1}^{(k + 1)} &&+ \ell_{2, j}^{(k)} &&=
  \cdb{2}_j^{(k)} &&+ \ell_{3, j}^{(k)} \\
  &  &&  &&\mathrel{\makebox[\widthof{=}]{\vdots}} && \nonumber
\end{alignat}

\begin{figure}
  \includegraphics[width=0.875\textwidth]{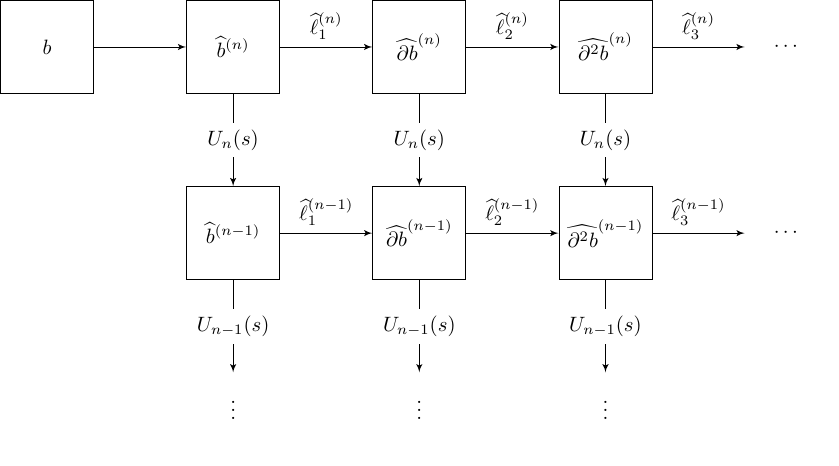}
  \centering
  \captionsetup{width=.75\linewidth}
  \caption{Filtering errors}
  \label{fig:error-filtration}
\end{figure}

In \texttt{CompDeCasteljau} (Algorithm~\ref{alg:comp-de-casteljau}), after
a single stage of error filtering we
``give up'' and use \(\cdb{1}\) instead of
\(\db{1}\) (without keeping around any information about the
round-off error). In order to obtain results that are as accurate as if
computed in \(K\) times the working precision, we must continue filtering
(see Figure~\ref{fig:error-filtration})
errors down \((K - 1)\) times, and only at the final level do we accept
the rounded
\(\cdb{K - 1}\) in place of the exact
\(\db{K - 1}\).

When computing \(\cdb{F}\) (i.e. the error after
\(F\) stages of filtering)
there will be several sources of round-off. In particular, there will be
\begin{itemize}
\item errors when computing \(\widehat{\ell}_{F, j}^{(k)}\) from the
  terms in \(\ell_{F, j}^{(k)}\)
\item an error
for the ``missing'' \(\rho \cdot \cdb{F}_j^{(k + 1)}\) in
\((1 - s) \cdot \cdb{F}_j^{(k + 1)}\)
\item an error from the product
  \(\widehat{r} \otimes \cdb{F}_j^{(k + 1)}\)
\item an error from the product
  \(s \otimes \cdb{F}_{j + 1}^{(k + 1)}\)
\item two errors from the two \(\oplus\) when combining the three
  terms in
  \(\widehat{\ell}_{F, j}^{(k)} \oplus
  \left(s \otimes \cdb{F}_{j + 1}^{(k + 1)}\right) \oplus
  \left(\widehat{r} \otimes \cdb{F}_j^{(k + 1)}\right)\)
\end{itemize}
For example, in~\eqref{ell-j}:
\begin{equation}
\ell_{1, j}^{(k)} =
    \underbrace{\vphantom{\rho \cdot \widehat{b}_j^{(k + 1)}} \pi_1}_{
        \vphantom{(1 - s) \widehat{b}_j^{(k + 1)}}
        P_1 = \widehat{r} \otimes \widehat{b}_j^{(k + 1)}} +
    \underbrace{\vphantom{\rho \cdot \widehat{b}_j^{(k + 1)}} \pi_2}_{
        \vphantom{(1 - s) \widehat{b}_j^{(k + 1)}}
        P_2 = s \otimes \widehat{b}_{j + 1}^{(k + 1)}} +
    \underbrace{\vphantom{\rho \cdot \widehat{b}_j^{(k + 1)}} \sigma_3}_{
        \vphantom{(1 - s) \widehat{b}_j^{(k + 1)}}
        P_1 \oplus P_2} +
    \underbrace{\rho \cdot \widehat{b}_j^{(k + 1)}}_{
        (1 - s) \widehat{b}_j^{(k + 1)}}
\end{equation}
After each stage, we'll always have
\begin{equation}
\ell_{F, j}^{(k)} = e_1 + \cdots + e_{5F - 2} + \rho \cdot
\cdb{F - 1}_j^{(k + 1)}
\end{equation}
where the terms \(e_1, \ldots, e_{5F - 2}\) come from using \texttt{TwoSum}
and \texttt{TwoProd} when computing \(\cdb{F - 1}_j^{(k)}\)
and the \(\rho\) term comes from the round-off
in \(1 \ominus s\) when multiplying \((1 - s)\) by
\(\cdb{F - 1}_j^{(k + 1)}\). With this in mind, we
can define an EFT (\texttt{LocalErrorEFT}) that computes
\(\widehat{\ell}\) and tracks all round-off errors generated in
the process:

\begin{breakablealgorithm}
  \caption{\textit{EFT for computing the local error.}}
  \label{alg:local-error-eft}

  \begin{algorithmic}
    \Function{\(\left[\eta, \widehat{\ell}\right] =
        \mathtt{LocalErrorEFT}\)}{$e, \rho, \delta b$}
      \State \(L = \texttt{length}(e)\)
      \\
      \State \(\left[\widehat{\ell}, \eta_1\right] =
          \mathtt{TwoSum}(e_1, e_2)\)
      \For{\(j = 3, \ldots, L\)}
        \State \(\left[\widehat{\ell}, \eta_{j - 1}\right] =
            \mathtt{TwoSum}\left(\widehat{\ell}, e_j\right)\)
      \EndFor
      \\
      \State \(\left[P, \eta_L\right] =
          \mathtt{TwoProd}\left(\rho, \delta b\right)\)
      \State \(\left[\widehat{\ell}, \eta_{L + 1}\right] =
          \mathtt{TwoSum}\left(\widehat{\ell}, P\right)\)
    \EndFunction
  \end{algorithmic}
\end{breakablealgorithm}

\noindent With this EFT in place\footnote{And the related
\texttt{LocalError} in Algorithm~\ref{alg:local-error}}, we can
perform \((K - 1)\) error filtrations. Once we've computed the \(K\) stages
of global errors, they can be combined with
\texttt{SumK} (Algorithm~\ref{alg:sum-k}) to produce a sum that is as
accurate as if computed in \(K\) times the working precision.

\begin{breakablealgorithm}
  \caption{\(K\)-\textit{compensated de Casteljau algorithm.}}
  \label{alg:k-comp-de-casteljau}

  \begin{algorithmic}
    \Function{\(\mathtt{result} = \mathtt{CompDeCasteljauK}\)}{$b, s, K$}
      \State \(n = \texttt{length}(b) - 1\)
      \State \(\left[\widehat{r}, \rho\right] = \mathtt{TwoSum}(1, -s)\)
      \\
      \For{\(j = 0, \ldots, n\)}
        \State \(\widehat{b}_j^{(n)} = b_j\)
        \For{\(F = 1, \ldots, K - 1\)}
          \State \(\cdb{F}_j^{(n)} = 0\)
        \EndFor
      \EndFor
      \\
      \For{\(k = n - 1, \ldots, 0\)}
        \For{\(j = 0, \ldots, k\)}
          \State \(\left[P_1, \pi_1\right] = \mathtt{TwoProd}\left(
              \widehat{r}, \widehat{b}_j^{(k + 1)}\right)\)
          \State \(\left[P_2, \pi_2\right] = \mathtt{TwoProd}\left(
              s, \widehat{b}_{j + 1}^{(k + 1)}\right)\)
          \State \(\left[\widehat{b}_j^{(k)}, \sigma_3\right] =
              \mathtt{TwoSum}(P_1, P_2)\)
          \\
          \State \(e = \left[\pi_1, \pi_2, \sigma_3\right]\)
          \State \(\delta b = \widehat{b}_j^{(k + 1)}\)
          \\
          \For{\(F = 1, \ldots, K - 2\)}
            \State \(\left[\eta, \widehat{\ell}\right] =
                \mathtt{LocalErrorEFT}(e, \rho, \delta b)\)
            \State \(L = \texttt{length}(\eta)\)
            \\
            \State \(\left[P_1, \eta_{L + 1}\right] = \mathtt{TwoProd}\left(
                s, \cdb{F}_{j + 1}^{(k + 1)}\right)\)
            \State \(\left[S_2, \eta_{L + 2}\right] =
                \mathtt{TwoSum}\left(\widehat{\ell}, P_1\right)\)
            \State \(\left[P_3, \eta_{L + 3}\right] = \mathtt{TwoProd}\left(
                \widehat{r}, \cdb{F}_j^{(k + 1)}\right)\)
            \State \(\left[\cdb{F}_j^{(k)}, \eta_{L + 4}\right]
                = \mathtt{TwoSum}\left(S_2, P_3\right)\)
            \\
            \State \(e = \eta\)
            \State \(\delta b = \cdb{F}_j^{(k + 1)}\)
          \EndFor
          \\
          \State \(\widehat{\ell} =
                \mathtt{LocalError}(e, \rho, \delta b)\)
          \State \(\cdb{K - 1}_j^{(k)} =
              \widehat{\ell} \oplus
              \left(s \otimes \cdb{K - 1}_{j + 1}^{(k + 1)}
              \right) \oplus
              \left(\widehat{r} \otimes
              \cdb{K - 1}_j^{(k + 1)}\right)\)
        \EndFor
      \EndFor
      \\
      \State \(\mathtt{result} = \mathtt{SumK}\left(\left[
        \widehat{b}_0^{(0)}, \ldots, \cdb{K - 1}_0^{(0)}\right], K\right)\)
    \EndFunction
  \end{algorithmic}
\end{breakablealgorithm}

\noindent Noting that \(\ell_{F, j}\) contains \(5F - 1\) terms, one can
show that \texttt{CompDeCasteljauK} (Algorithm~\ref{alg:k-comp-de-casteljau})
requires
\begin{equation}
(15K^2 + 11K - 34)T_n + 6K^2 - 11K + 11 =
\bigO{n^2 K^2}
\end{equation}
flops to evaluate a degree \(n\) polynomial, where \(T_n\) is the
\(n\)th triangular number. As a comparison, the non-compensated form of
de Casteljau requires \(3 T_n + 1\) flops. In total this will require
\((3K - 4)T_n\) uses of \texttt{TwoProd}. On hardware that supports
FMA, \texttt{TwoProdFMA} (Algorithm~\ref{alg:two-prod-fma}) can be used
instead, lowering the flop count by \(15(3K - 4)T_n\). Another way
to lower the total flop count is to just use
\(\widehat{b}_0^{(0)} \oplus \cdots \oplus \cdb{K - 1}_0^{(0)}\)
instead of \texttt{SumK}; this will reduce the total by
\(6(K - 1)^2\) flops. When using a standard sum, the results produced
are (empirically) identical to those with \texttt{SumK}. This makes
sense: the whole point of \texttt{SumK}
is to filter errors in a summation so that the final operation produces
a sum of the form \(v_1 \oplus \cdots \oplus v_K\) where each
term is smaller than the previous by a factor of \(\mach\). This
property is already satisfied for the \(\cdb{F}_0^{(0)}\) so in
practice the \(K\)-compensated summation is likely not needed.

\subsection{Error bound for polynomial evaluation}

\begin{theorem}[\cite{Ogita2005}, Proposition 4.10]\label{thm:sum-k}
A summation can be computed (\texttt{SumK}, Algorithm~\ref{alg:sum-k})
with results that are as accurate as if computed in \(K\) times the
working precision. When computed this way, the result satisfies:
\begin{equation}
\left|\mathtt{SumK}(v, K) - \sum_{j = 1}^n v_j\right| \leq
\left(\mach + 3 \gamma_{n - 1}^2\right) \left|\sum_{j = 1}^n v_j\right| +
\gamma_{2n - 2}^K \sum_{j = 1}^n \left|v_j\right|.
\end{equation}
\end{theorem}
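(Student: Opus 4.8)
The proof treats \texttt{SumK} as what it is: the error-free transformation \texttt{VecSum} --- a left-to-right cascade of \texttt{TwoSum} calls that leaves a running floating-point sum in the last slot and the accumulated round-offs in the first \(n-1\) slots --- applied \(K-1\) times, followed by one ordinary recursive summation of the resulting length-\(n\) vector in index order. Write \(s=\sum_{j=1}^n v_j\) and \(\Sigma=\sum_{j=1}^n |v_j|\). First I would isolate the two properties of a single \texttt{VecSum} pass \(p\mapsto p'\) that do all the work: (i) \(\sum_i p'_i=\sum_i p_i\), which is immediate because every \texttt{TwoSum} is error-free (Theorem~\ref{thm:eft}); and (ii) \(\sum_{i=1}^{n-1}|p'_i|\le\gamma_{n-1}\sum_{i=1}^{n}|p_i|\), obtained by an induction along the cascade: the round-off produced by the \(i\)-th \texttt{TwoSum} is at most \(\mach\) times the current partial sum, the partial sums are bounded by \((1+\mach)^{i-1}\sum|p_i|\), and properties 2 and 5 of Section~\ref{sec:notation} collapse the telescoping product to \(\gamma_{n-1}\). (Property (ii) is the ``\texttt{VecSum} lemma'' of~\cite{Ogita2005}; one could also simply cite it.)

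Next I would iterate. Set \(p^{(0)}=v\) and \(p^{(m)}=\texttt{VecSum}(p^{(m-1)})\), and let \(E_m=\sum_{i=1}^{n-1}|p^{(m)}_i|\). Property (i) gives \(\sum_i p^{(m)}_i=s\) for all \(m\), hence \(|p^{(m)}_n|\le |s|+E_m\) and therefore \(\sum_i|p^{(m)}_i|\le |s|+2E_m\). Feeding this into (ii) yields the scalar recursion \(E_{m+1}\le\gamma_{n-1}(|s|+2E_m)\) with \(E_0\le\Sigma\), which unrolls to
\[
  E_{K-1}\le\frac{\gamma_{n-1}}{1-2\gamma_{n-1}}\,|s|\;+\;(2\gamma_{n-1})^{K-1}\,\Sigma .
\]
The geometric factor \((2\gamma_{n-1})^{K-1}\) shrinks the \(\Sigma\)-contribution geometrically in \(K\); the \(|s|\)-part, on the other hand, saturates at order \(\gamma_{n-1}\) and will pick up only one further power of \(\gamma_{n-1}\) from the closing summation.

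The third ingredient is the final recursive sum \(\mathrm{res}=p^{(K-1)}_1\oplus\cdots\oplus p^{(K-1)}_n\). Here it is essential that the dominant entry \(p^{(K-1)}_n\) is added \emph{last}: letting \(t\) be the floating-point sum of the first \(n-1\) (tiny) entries, one has \(|t-\sum_{i<n}p^{(K-1)}_i|\le\gamma_{n-2}E_{K-1}\) and \(\mathrm{res}=(t+p^{(K-1)}_n)(1+\delta)\) with \(|\delta|\le\mach\), and combining these with \(|p^{(K-1)}_n|\le|s|+E_{K-1}\) and the \(\gamma_n\)-calculus (properties 3--5) gives \(|\mathrm{res}-s|\le\mach|s|+\gamma_{n-1}E_{K-1}\) --- note the coefficient of \(|s|\) is \(\mach\), not \(\gamma_{n-1}\), precisely because the big term is summed last rather than first. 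Substituting the bound on \(E_{K-1}\), using \(\gamma_{n-1}^2/(1-2\gamma_{n-1})\le 3\gamma_{n-1}^2\) and \(2^{K-1}\gamma_{n-1}^{K}\le(2\gamma_{n-1})^{K}\le\gamma_{2n-2}^{K}\) (the latter because \(\gamma_{2m}\ge 2\gamma_m\)), produces \(|\mathrm{res}-s|\le(\mach+3\gamma_{n-1}^2)|s|+\gamma_{2n-2}^{K}\Sigma\), which is the claim.

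I expect the main obstacle to be bookkeeping rather than any single inequality: one must route the ``big term last'' observation through the closing summation so that only \(\mach|s|\) survives with coefficient \(\mach\), and then match the two advertised coefficients \(3\gamma_{n-1}^2\) and \(\gamma_{2n-2}^{K}\) exactly --- which is what forces the slightly lossy steps \(2\gamma_{n-1}\le\gamma_{2n-2}\) and \(1/(1-2\gamma_{n-1})\le 3\) (both harmless since \(n\mach\ll 1\) at the precisions of interest) together with some care over the subscripts of the intermediate \(\gamma\)'s. The only place needing a genuine induction is the single-pass estimate (ii) for \texttt{VecSum}; everything after that is the \(\gamma_n\) algebra recalled in Section~\ref{sec:notation}.
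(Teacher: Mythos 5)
The paper does not prove this statement at all: it is imported verbatim as Proposition 4.10 of \cite{Ogita2005}, so there is no in-paper proof to compare against. Your reconstruction is essentially the standard argument from that reference --- the error-free and \(\gamma_{n-1}\)-contraction properties of a single \texttt{VecSum} pass, the scalar recursion \(E_{m+1}\le\gamma_{n-1}(|s|+2E_m)\), and the ``large term summed last'' analysis of the closing recursive sum --- and the steps all check out (including \(2\gamma_{n-1}\le\gamma_{2n-2}\) and \(\gamma_{n-1}^2/(1-2\gamma_{n-1})\le 3\gamma_{n-1}^2\), the latter under the usual smallness assumption on \(n\mach\) that \cite{Ogita2005} imposes). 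No gap to report.
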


\begin{lemma}[\cite{Jiang2010}, Theorem 4]\label{lemma:db-lemma}
The second order error \(\db{2}^{(0)}_0\) satisfies\footnote{The authors
  missed one round-off error so used \(\gamma_{3n + 1}\) where
  \(\gamma_{3n + 2}\) would have followed from their arguments.}
\begin{equation}
  \left|\db{1}^{(0)}_0 - \cdb{1}^{(0)}_0\right| =
  \left|\db{2}^{(0)}_0\right| \leq 2 \gamma_{3n + 2} \gamma_{3(n - 1)}
  \widetilde{p}(s).
\end{equation}
\end{lemma}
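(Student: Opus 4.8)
\emph{Approach.} The plan is to introduce a third layer of error and compare $\db{1}_0^{(0)}$ (the exact error) with $\cdb{1}_0^{(0)}$ (its floating‑point reconstruction), exploiting that both come from the \emph{same} ``sourced'' de Casteljau recursion $d_j^{(k)} = (1-s)\,d_j^{(k+1)} + s\,d_{j+1}^{(k+1)} + (\mathrm{src})_j^{(k)}$ started from $d^{(n)}=0$, whose solution (as in \eqref{matrix-de-casteljau}) is $d_0^{(0)} = \sum_{m=0}^{n-1}\sum_{j=0}^{m} B_{j,m}(s)\,(\mathrm{src})_j^{(m)}$. By \eqref{err-update}, $\db{1}_0^{(0)}$ is this with the exact sources $\ell_{1,j}^{(k)}$ in exact arithmetic; by Algorithm~\ref{alg:comp-de-casteljau}, $\cdb{1}_0^{(0)}$ is this with the computed sources $\widehat{\ell}_{1,j}^{(k)}$ in floating point. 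Throughout I write $\widetilde{b}_j^{(k)} := \sum_{i=0}^{n-k} |b_{j+i}|\,B_{i,n-k}(s)$; for $s\in[0,1]$ one has $|b_j^{(k)}| \le \widetilde{b}_j^{(k)}$, the \emph{exact} recursion $\widetilde{b}_j^{(k)} = (1-s)\widetilde{b}_j^{(k+1)} + s\,\widetilde{b}_{j+1}^{(k+1)}$ (so $\sum_{j=0}^{m} B_{j,m}(s)\widetilde{b}_j^{(m)} = \widetilde{b}_0^{(0)} = \widetilde{p}(s)$ for every $m$), and $|\widehat{b}_j^{(k)}| \le (1+\gamma_{3(n-k)})\widetilde{b}_j^{(k)}$ --- the last by the de Casteljau round‑off bound (\cite{Mainar1999}, Corollary~3.2) applied to the degree‑$(n-k)$ subpolynomial with coefficients $b_j,\dots,b_{j+n-k}$, since the computed $\widehat{b}_j^{(k)}$ is exactly what \texttt{DeCasteljau} returns for it.

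\emph{Splitting the error.} First I would write
\[
\db{2}_0^{(0)} = \db{1}_0^{(0)} - \cdb{1}_0^{(0)} = \sum_{m=0}^{n-1}\sum_{j=0}^{m} B_{j,m}(s)\bigl(\ell_{1,j}^{(m)} - \widehat{\ell}_{1,j}^{(m)}\bigr) \;-\; \Bigl(\cdb{1}_0^{(0)} - \sum_{m=0}^{n-1}\sum_{j=0}^{m} B_{j,m}(s)\widehat{\ell}_{1,j}^{(m)}\Bigr),
\]
the first term being the effect of feeding rounded sources into the exact recursion, the second the round‑off committed while running that recursion in floating point. For the second term I would count, over the lattice paths that carry $\widehat{\ell}_{1,j}^{(m)}$ down to the root of $\cdb{1}_0^{(0)}$, the accumulated rounding factors: every path from $(m,j)$ consists of $j$ ``down'' moves and $m-j$ ``stay'' moves, hence carries the \emph{same} weight $(1-s)^{m-j}s^{j}$ and the \emph{same} number $(m-j) + \bigl(2 + 2(m-j) + 3j\bigr) = 3m+2$ of rounding factors (the single rounding in $1\ominus s$ reused in the $m-j$ stays; two roundings at the injection; two per stay; three per down), and these weights sum to $B_{j,m}(s)$. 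Thus the coefficient of $\widehat{\ell}_{1,j}^{(m)}$ in $\cdb{1}_0^{(0)}$ is $B_{j,m}(s)(1+\vartheta_{m,j})$ with $|\vartheta_{m,j}| \le \gamma_{3m+2}$, giving
\[
| \db{2}_0^{(0)} | \le \sum_{m=0}^{n-1}\sum_{j=0}^{m} B_{j,m}(s)\Bigl[ | \ell_{1,j}^{(m)} - \widehat{\ell}_{1,j}^{(m)} | + \gamma_{3m+2}\, | \widehat{\ell}_{1,j}^{(m)} | \Bigr].
\]

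\emph{Pointwise bounds and assembly.} Writing $\|\ell_{1,j}^{(k)}\| := |\pi_1| + |\pi_2| + |\sigma_3| + |\rho\,\widehat{b}_j^{(k+1)}|$, the fact that $\widehat{\ell}_{1,j}^{(k)}$ is obtained from these four summands by three $\oplus$ and one $\otimes$ gives, via Theorem~\ref{thm:eft}, $|\ell_{1,j}^{(m)} - \widehat{\ell}_{1,j}^{(m)}| \le \gamma_3\|\ell_{1,j}^{(m)}\|$ and $|\widehat{\ell}_{1,j}^{(m)}| \le (1+\gamma_3)\|\ell_{1,j}^{(m)}\|$; and bounding $|\pi_1|,|\pi_2|,|\sigma_3|$ by Theorem~\ref{thm:eft} together with $|\rho|\le\mach(1-s)$, $\widehat{r}\le(1-s)(1+\mach)$, the bound on $|\widehat{b}_\bullet^{(m+1)}|$ above, and the $\widetilde{b}$‑recursion, everything collapses to $\|\ell_{1,j}^{(m)}\| \le 3\mach(1+2\mach)(1+\gamma_{3(n-m-1)})\widetilde{b}_j^{(m)}$. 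Substituting, doing the inner sum with $\sum_{j} B_{j,m}(s)\widetilde{b}_j^{(m)} = \widetilde{p}(s)$, and simplifying the $\gamma$'s --- $\gamma_3 + (1+\gamma_3)\gamma_{3m+2} \le \gamma_{3m+5}$ and $3\mach(1+2\mach)\le\gamma_3$ (properties 4--5), $1+\gamma_{3(n-m-1)} \le (1-3(n-1)\mach)^{-1}$, and the common‑denominator estimate $\sum_{m=0}^{n-1}\gamma_{3m+5} \le \frac{\mach}{1-(3n+2)\mach}\cdot\frac{n(3n+7)}{2}$ --- leaves $| \db{2}_0^{(0)} | \le \frac{n(3n+7)\,\mach\,\gamma_3}{2\,(1-3(n-1)\mach)(1-(3n+2)\mach)}\,\widetilde{p}(s)$. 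Since $2\gamma_{3n+2}\gamma_{3(n-1)} = \frac{6(3n+2)(n-1)\mach^2}{(1-(3n+2)\mach)(1-3(n-1)\mach)}$, the lemma reduces to the elementary inequality $n(3n+7) \le 4(3n+2)(n-1)(1-3\mach)$, i.e.\ $12(3n+2)(n-1)\mach \le 9n^2 - 11n - 8$, which holds for every $n\ge 2$ since the right side is positive there and $\mach$ is tiny. (The equality $| \db{1}_0^{(0)} - \cdb{1}_0^{(0)} | = | \db{2}_0^{(0)} |$ in the statement is just the definition $\db{1} = \cdb{1} + \db{2}$.)

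\emph{Main obstacle.} The delicate part is this last bookkeeping, because the bound is essentially tight: already for $n=2$ the $\bigO{\mach^2}$ leading terms of the two sides coincide, so none of the intermediate estimates can be slack. In particular, the factor $\tfrac12$ produced by $\sum_{m=0}^{n-1}(3m+5) = \tfrac{n(3n+7)}{2}$ is exactly what makes $9n^2$ beat $\tfrac{33}{2}n^2$ and lets the argument go through at $n=2$ rather than only for $n\gtrsim 5$ --- the cruder $\sum_{m=0}^{n-1}\gamma_{3m+5}\le n\gamma_{3n+2}$ fails for small $n$. A secondary technical point is establishing the path‑weight/rounding‑count identity of the second step uniformly in the order of the stay/down moves (the count $3m+2$ being independent of $j$ and of the path).
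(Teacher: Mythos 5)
Your proof is correct, and it is essentially the paper's own approach: the paper does not prove this lemma directly (it cites it from Jiang et al.), but its proof of the general Lemma~\ref{lemma:k-order} uses exactly your decomposition --- unroll both $\db{1}_0^{(0)}$ and $\cdb{1}_0^{(0)}$ as Bernstein-weighted sums of local errors, absorb the floating-point propagation into a $(1+\theta_{3k+2})$ factor, and bound $\bigl|\ell_{1,j}^{(k)} - \widehat{\ell}_{1,j}^{(k)}(1+\theta_{3k+2})\bigr|$ by $\gamma_{3k+5}$ times the absolute local error --- and your argument is that proof specialized to $F=1$ with the constants tracked explicitly down to $\tfrac{3n(3n+7)}{2}\mach^2$, which indeed sits below $2\gamma_{3n+2}\gamma_{3(n-1)} \approx 6(3n+2)(n-1)\mach^2$ for all $n\geq 2$. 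The only slip is in your closing commentary, not the proof: the leading terms do not coincide at $n=2$ (they are $39\mach^2$ versus $48\mach^2$), so the bound is not tight there, although your point that the cruder estimate $\sum_{m}\gamma_{3m+5}\leq n\gamma_{3n+2}$ would be borderline at $n=2$ is well taken.
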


To enable a bound on the \(K\) order error \(\db{K}^{(0)}_0\), it's necessary
to understand the difference between the exact local errors \(\ell_{F, j}\)
and the computed equivalents \(\widehat{\ell}_{F, j}\). To do this, we define
\begin{equation}
\widetilde{\ell}_{F, j} \coloneqq \left|e_1\right| +
\cdots + \left|e_{5F - 2}\right| + \left|\rho \cdot
\cdb{F - 1}_j^{(k + 1)}\right|.
\end{equation}

\begin{lemma}\label{lemma:ell-tilde}
The local error bounds \(\widetilde{\ell}_{F, j}\) satisfy:
\begin{align}
\widetilde{\ell}_{1, j}^{(k)} &\leq
  \gamma_3 \left(
  (1 - s) \left|\widehat{b}_j^{(k + 1)}\right| +
  s \left|\widehat{b}_{j + 1}^{(k + 1)}\right|\right)
  \label{ell-tilde-1} \\
\widetilde{\ell}_{F + 1, j}^{(k)} &\leq
  \gamma_3 \left(
  (1 - s) \left|\cdb{F}_j^{(k + 1)}\right| +
  s \left|\cdb{F}_{j + 1}^{(k + 1)}\right|\right) +
  \gamma_{5F} \cdot \widetilde{\ell}_{F, j}^{(k)}
  \text{ for } F \geq 1.
\end{align}
\end{lemma}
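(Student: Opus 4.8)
The plan is to prove the two displayed inequalities separately; neither requires an induction on $F$ --- each follows directly from the error-free transformation bounds of Theorem~\ref{thm:eft} together with the elementary facts (1)--(5) on the $\gamma_k$. As setup I would record two facts used repeatedly: because $[\widehat{r}, \rho] = \mathtt{TwoSum}(1, -s)$ and $s \in [0, 1]$, Theorem~\ref{thm:eft} gives $|\rho| \leq \mach |1 - s| = \mach(1 - s)$ and $|\widehat{r}| = |\fl{1 - s}| \leq (1 + \mach)(1 - s)$, while every floating-point sum or product satisfies $|\fl{a \circ b}| \leq (1 + \mach)|a \circ b|$. The arithmetic device I would lean on throughout is the identity $\mach\bigl(1 + (1 + \mach) + \cdots + (1 + \mach)^{m - 1}\bigr) = (1 + \mach)^m - 1$, so that a collected round-off coefficient of this shape collapses to $(1 + \mach)^m - 1 \leq \gamma_m$ by property~(5).

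For \eqref{ell-tilde-1} I would expand $\widetilde{\ell}_{1, j}^{(k)} = |\pi_1| + |\pi_2| + |\sigma_3| + |\rho \cdot \widehat{b}_j^{(k + 1)}|$ and bound each of the four terms with Theorem~\ref{thm:eft}, pushing every factor of $(1 + \mach)$ back through the intermediate products $P_1 = \widehat{r} \otimes \widehat{b}_j^{(k + 1)}$ and $P_2 = s \otimes \widehat{b}_{j + 1}^{(k + 1)}$ that feed $\sigma_3 = \mathtt{TwoSum}(P_1, P_2)$. After collecting, the coefficient of $(1 - s)|\widehat{b}_j^{(k + 1)}|$ is $\mach\bigl(1 + (1 + \mach) + (1 + \mach)^2\bigr) = (1 + \mach)^3 - 1 \leq \gamma_3$ and the coefficient of $s |\widehat{b}_{j + 1}^{(k + 1)}|$ is $\mach(2 + \mach) = (1 + \mach)^2 - 1 \leq \gamma_2 \leq \gamma_3$, which is the claim.

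For the recursive bound with $F \geq 1$ I would use the structural fact recorded just above the lemma, namely that the terms $e_1, \ldots, e_{5(F + 1) - 2}$ of $\ell_{F + 1, j}^{(k)}$ are exactly the $5F + 3$ round-offs $\eta_1, \ldots, \eta_{5F + 3}$ generated during the $F$-th pass of the inner loop of Algorithm~\ref{alg:k-comp-de-casteljau}, so $\widetilde{\ell}_{F + 1, j}^{(k)} = \sum_{i = 1}^{5F + 3} |\eta_i| + |\rho \cdot \cdb{F}_j^{(k + 1)}|$, and split these into three groups. \emph{(i)} The $5F - 1$ round-offs internal to \texttt{LocalErrorEFT} are the errors of a recursive summation of the $5F - 1$ quantities $e_1, \ldots, e_{5F - 2}, \fl{\rho \cdot \cdb{F - 1}_j^{(k + 1)}}$ together with the one \texttt{TwoProd}; bounding the running partial sums by property~(5) and absorbing the extra $\mach$ factors by property~(4) gives that their total is at most $\gamma_{5F - 2}\, \widetilde{\ell}_{F, j}^{(k)}$, and since $\widehat{\ell}_{F, j}^{(k)} = \ell_{F, j}^{(k)} - \sum_{i \leq 5F - 1} \eta_i$ with $|\ell_{F, j}^{(k)}| \leq \widetilde{\ell}_{F, j}^{(k)}$ this also yields $|\widehat{\ell}_{F, j}^{(k)}| \leq (1 + \gamma_{5F - 2}) \widetilde{\ell}_{F, j}^{(k)}$. \emph{(ii)} The four round-offs from $\mathtt{TwoProd}(s, \cdb{F}_{j + 1}^{(k + 1)})$, $\mathtt{TwoSum}(\widehat{\ell}, P_1)$, $\mathtt{TwoProd}(\widehat{r}, \cdb{F}_j^{(k + 1)})$ and $\mathtt{TwoSum}(S_2, P_3)$: Theorem~\ref{thm:eft} bounds the two product errors by $\mach s |\cdb{F}_{j + 1}^{(k + 1)}|$ and $\mach(1 + \mach)(1 - s)|\cdb{F}_j^{(k + 1)}|$, and each sum error by $\mach$ times the relevant partial sums, which involve the magnitudes of $\widehat{\ell}_{F, j}^{(k)}$, $P_1 = \fl{s \cdot \cdb{F}_{j + 1}^{(k + 1)}}$ and $P_3 = \fl{\widehat{r} \cdot \cdb{F}_j^{(k + 1)}}$; I would expand each of these into a multiple of $\widetilde{\ell}_{F, j}^{(k)}$, $s |\cdb{F}_{j + 1}^{(k + 1)}|$ or $(1 - s)|\cdb{F}_j^{(k + 1)}|$ carrying one or two extra $(1 + \mach)$ factors. \emph{(iii)} The leftover term satisfies $|\rho \cdot \cdb{F}_j^{(k + 1)}| \leq \mach(1 - s)|\cdb{F}_j^{(k + 1)}|$. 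Summing the contributions, the coefficient of $\widetilde{\ell}_{F, j}^{(k)}$ is $\gamma_{5F - 2} + \mach(2 + \mach)(1 + \gamma_{5F - 2}) = (1 + \mach)^2(1 + \gamma_{5F - 2}) - 1 \leq \gamma_{5F}$, using $(1 + \mach)^2 \leq 1 + \gamma_2$ (property~(5)) and $(1 + \gamma_2)(1 + \gamma_{5F - 2}) \leq 1 + \gamma_{5F}$ (property~(4)); while the coefficients of $(1 - s)|\cdb{F}_j^{(k + 1)}|$ and of $s |\cdb{F}_{j + 1}^{(k + 1)}|$ each telescope to $\mach\bigl(1 + (1 + \mach) + (1 + \mach)^2\bigr) = (1 + \mach)^3 - 1 \leq \gamma_3$, which is the asserted recursion.

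The step I expect to be the main obstacle is the bookkeeping in group~\emph{(i)} and the final collection. The computed local error $\widehat{\ell}_{F, j}^{(k)}$ is consumed by the chained sums $S_2 = \fl{\widehat{\ell}_{F, j}^{(k)} + P_1}$ and then $\cdb{F}_j^{(k)} = \fl{S_2 + P_3}$, so its magnitude feeds into two successive round-offs with compounding $(1 + \mach)$ factors, and this has to be added to the $\gamma_{5F - 2}$ coming out of the internal summation. The resulting estimate for the $\widetilde{\ell}_{F, j}^{(k)}$-coefficient is tight and closes only because of the multiplicative property~(4); in particular one must count the $\rho$-multiplication and the trailing \texttt{TwoSum} of \texttt{LocalErrorEFT} inside the summation carefully, so that those internal errors contribute $\gamma_{5F - 2}$ and not the naive $\gamma_{5F - 1}$ --- the latter count would push the collected coefficient past $\gamma_{5F}$ and the bound would fail.
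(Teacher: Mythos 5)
Your proposal is correct and follows essentially the same route as the paper's proof: both track each round-off term through the chain of \texttt{TwoSum}/\texttt{TwoProd} calls, accumulate powers of \((1 + \mach)\), and collapse them via \((1+\mach)^m - 1 \leq \gamma_m\), arriving at the coefficients \(\gamma_3\) for the \(\cdb{F}\) terms and \(\gamma_{5F}\) for \(\widetilde{\ell}_{F,j}^{(k)}\). Your grouping into the \(5F-2\) internal summation steps (giving \(\gamma_{5F-2}\) plus the intermediate bound \(|\widehat{\ell}_{F,j}^{(k)}| \leq (1+\gamma_{5F-2})\widetilde{\ell}_{F,j}^{(k)}\)) and the four outer EFTs is only a cosmetic reorganization of the paper's term-by-term coefficients \(\gamma_{5F}, \gamma_{5F}, \gamma_{5F-1}, \ldots, \gamma_4\), and your flop counts check out.
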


As we'll see soon (Lemma~\ref{lemma:k-order}), putting a bound on
sums of the form \(\sum_{j = 0}^k \widetilde{\ell}_{F, j}^{(k)} B_{j, k}(s)\) will
be useful to get an overall bound on the relative error for
\texttt{CompDeCasteljauK}, so we define
\(L_{F, k} \coloneqq \sum_{j = 0}^k \widetilde{\ell}_{F, j}^{(k)} B_{j, k}(s)\).

\begin{lemma}\label{lemma:L-and-D-bounds}
For \(s \in \left[0, 1\right]\), the Bernstein-type error sum defined above
satisfies the following bounds:
\begin{align}
L_{F, n - k} &\leq \left[\left(3^F \binom{k}{F - 1} + \bigO{k^{F - 1}}\right)
  \mach^F + \bigO{\mach^{F + 1}}\right] \cdot \widetilde{p}(s) \\
\sum_{k = 0}^{n - 1} \gamma_{3k + 5F} L_{F, k} &\leq
  \left[\left(3^{F + 1} \binom{n}{F + 1} + \bigO{n^F}\right)
  \mach^{F + 1} + \bigO{\mach^{F + 2}}\right] \cdot \widetilde{p}(s).
  \label{L-sum-bound}
\end{align}
In particular, this means that
\(\sum_{k = 0}^{n - 1} \gamma_{3k + 5F} L_{F, k} =
\bigO{(3 n \mach)^{F + 1}} \cdot \widetilde{p}(s)\).
\end{lemma}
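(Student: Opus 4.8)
The plan is to prove both displayed bounds by induction on $F$, after first recording how the ``Bernstein-weighted'' sums of the computed quantities grow with the level index. Introduce the auxiliary sums $\widehat{B}_k := \sum_{j=0}^{k} |\widehat{b}_j^{(k)}| \, B_{j,k}(s)$ and $D_{F,k} := \sum_{j=0}^{k} |\cdb{F}_j^{(k)}| \, B_{j,k}(s)$, so that in particular $L_{F,k}$, $\widehat B_k$, $D_{F,k}$ are all of the same ``weighted sum over a level'' shape. The one combinatorial fact used throughout is the Bernstein recursion $(1-s)B_{i,k}(s) + s B_{i-1,k}(s) = B_{i,k+1}(s)$, which says that for any nonnegative sequence $(c_i)$ we have $\sum_{j} \big((1-s)c_j + s\, c_{j+1}\big) B_{j,k}(s) = \sum_{i} c_i B_{i,k+1}(s)$; i.e. a convex-combination update at level $k$ converts a weighted sum at level $k+1$ into one at level $k$. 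I will call this the shift identity.

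First I would bound $\widehat{B}_k$. From $\widehat{b}_j^{(k)} = (\widehat{r}\otimes\widehat{b}_j^{(k+1)})\oplus(s\otimes\widehat{b}_{j+1}^{(k+1)})$, the $(1+\delta)$ model and $|\widehat{r}|\le(1-s)(1+\mach)$ give $|\widehat{b}_j^{(k)}|\le(1+\mach)^3\big((1-s)|\widehat{b}_j^{(k+1)}| + s|\widehat{b}_{j+1}^{(k+1)}|\big)$; summing against $B_{j,k}(s)$ and applying the shift identity yields $\widehat{B}_k\le(1+\mach)^3\widehat{B}_{k+1}$, and since $\widehat{B}_n=\widetilde{p}(s)$ this telescopes to $\widehat{B}_k\le(1+\gamma_{3(n-k)})\widetilde{p}(s)$. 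The same computation applied to $\cdb{F}_j^{(k)} = \widehat{\ell}_{F,j}^{(k)}\oplus(s\otimes\cdb{F}_{j+1}^{(k+1)})\oplus(\widehat{r}\otimes\cdb{F}_j^{(k+1)})$, together with $|\widehat{\ell}_{F,j}^{(k)}|\le(1+\gamma_{5F})\widetilde{\ell}_{F,j}^{(k)}$ (the computed local error is a chained \texttt{TwoSum}/\texttt{TwoProd} combination of the $5F-1$ terms whose absolute values sum to $\widetilde{\ell}_{F,j}^{(k)}$), gives $D_{F,k}\le(1+\bigO{\mach})\big(L_{F,k}+D_{F,k+1}\big)$; since $D_{F,n}=0$, telescoping gives $D_{F,k}\le(1+\bigO{\mach})\sum_{m=k}^{n-1}L_{F,m}$.

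Now I would run the induction for the first bound. The base case $F=1$ is immediate: \eqref{ell-tilde-1} and the shift identity give $L_{1,k}\le\gamma_3\widehat{B}_{k+1}\le\gamma_3(1+\gamma_{3(n-k-1)})\widetilde{p}(s)$, which has the asserted form since $\binom{k}{0}=1$. For the step, the second inequality of Lemma~\ref{lemma:ell-tilde} and the shift identity give $L_{F+1,k}\le\gamma_3 D_{F,k+1} + \gamma_{5F}L_{F,k}$; inserting the telescoped bound for $D_{F,k+1}$ and the inductive hypothesis $L_{F,m}\le\big(3^F\binom{n-m}{F-1}+\bigO{(n-m)^{F-1}}\big)\mach^F\widetilde{p}(s)+\bigO{\mach^{F+1}}\widetilde{p}(s)$, the leading contribution is $\gamma_3\cdot 3^F\widetilde{p}(s)\mach^F\sum_{m=k+1}^{n-1}\binom{n-m}{F-1}$, and the hockey-stick identity $\sum_{i=1}^{N}\binom{i}{F-1}=\binom{N+1}{F}$ collapses this to $3^{F+1}\binom{n-k}{F}\mach^{F+1}\widetilde{p}(s)$ up to $\bigO{\mach^{F+2}}$; since $\gamma_{5F}L_{F,k}$ is $\bigO{\mach^{F+1}}$ it only feeds the lower-order part, so the bound for $F+1$ follows. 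Finally, for the second bound, insert the first bound into $\sum_{k=0}^{n-1}\gamma_{3k+5F}L_{F,k}$, use $\gamma_{3k+5F}\le(3k+5F)\mach(1+\bigO{\mach})$, and evaluate $\sum_{k=0}^{n-1}k\binom{n-k}{F-1}$: the identity $i\binom{i}{F-1}=(F-1)\binom{i}{F-1}+F\binom{i}{F}$ together with hockey-stick gives $\sum_{k=0}^{n-1}k\binom{n-k}{F-1}=\binom{n+1}{F+1}$, so the sum is $3^{F+1}\binom{n}{F+1}\mach^{F+1}\widetilde{p}(s)$ up to $\bigO{\mach^{F+2}}$, and the ``in particular'' statement follows from $3^{F+1}\binom{n}{F+1}\le(3n)^{F+1}/(F+1)!$.

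The step I expect to be the real obstacle is not the algebra but the bookkeeping of the $\bigO{\cdot}$ remainders: to keep the induction closed one must carry enough structure to guarantee that, at each power $\mach^j$, the coefficient is a polynomial in the level count of degree at most $j-1$, since otherwise summing over the $\Theta(n)$ levels would generate a spurious extra factor of $n$ and break \eqref{L-sum-bound}; and the integer constants appearing in the $\gamma$-subscripts ($5F$, $3(n-k)$, $4n$, and so on) have to be tracked carefully enough that they can only ever land in the lower-order terms. A secondary nuisance is the behaviour of the binomial coefficients for small $F$ (e.g. $\binom{k}{0}$ versus $\binom{k}{F}$, and the off-by-one in hockey-stick when $F=1$), which must be checked separately but causes no genuine difficulty.
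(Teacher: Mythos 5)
Your setup is the same as the paper's: you introduce the same Bernstein-weighted level sums (your $\widehat{B}_k$ is the paper's $D_{0,k}$), derive the same four recursive inequalities from Lemma~\ref{lemma:ell-tilde} and the rounding model, telescope $D_{F,k}$ against $\sum_m L_{F,m}$, and induct on $F$. The induction for the first displayed bound closes as you describe, and your combinatorial identities (including $\sum_{k=0}^{n-1}k\binom{n-k}{F-1}=\binom{n+1}{F+1}$) check out.

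The gap is in your derivation of \eqref{L-sum-bound}. You feed the first bound, carrying only an unquantified $\bigO{k^{F-1}}$ correction, into $\sum_{k=0}^{n-1}\gamma_{3k+5F}L_{F,k}$ and keep the lead term, asserting the rest is $\bigO{\mach^{F+2}}$. But the neglected contribution $\sum_{k=0}^{n-1}3k\mach\cdot\bigO{(n-k)^{F-1}}\mach^{F}=\bigO{n^{F+1}}\mach^{F+1}$ is of the \emph{same order} as the lead term $3^{F+1}\binom{n}{F+1}\mach^{F+1}$ you are trying to isolate; it is neither $\bigO{\mach^{F+2}}$ nor absorbable into the $\bigO{n^{F}}$ slack of the stated bound. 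Concretely, for $F=1$ the first bound only yields $L_{1,n-k}\le\left(C\mach+\bigO{\mach^{2}}\right)\widetilde{p}(s)$ for some unspecified constant $C\ge 3$, and then $\sum_{k}3k\mach\cdot C\mach$ gives $3C\binom{n}{2}\mach^{2}$, which matches the claimed $9\binom{n}{2}\mach^{2}$ only if $C$ is pinned down to $3$. What is needed --- and what your inductive step does in fact produce if you carry it --- is the sharper invariant that the $\mach^{F}$ coefficient in the $L_{F,n-k}$ bound equals $3^{F}\binom{k}{F-1}$ plus a polynomial in $k$ of degree at most $F-2$ (not $F-1$); the bookkeeping condition you flag (``degree at most $j-1$'' for the whole coefficient) is satisfied but too weak to protect the lead constant. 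The paper avoids the issue by tracking the full polynomials $r_F,q_F$ in the binomial basis and observing that $\sum_{k=1}^{n}\left(3(n-k)+5F\right)r_F(k)$ is \emph{exactly} $q_{F+1}(n)$, whose lead term was already identified, so no separate estimate of the corrections is ever needed. Strengthening your inductive hypothesis accordingly repairs the argument.
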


See Appendix~\ref{sec:appendix-proof-details} for details on
proving Lemma~\ref{lemma:ell-tilde} and Lemma~\ref{lemma:L-and-D-bounds}.

\begin{lemma}\label{lemma:k-order}
The \(K\) order error \(\db{K}^{(0)}_0\) satisfies
\begin{equation}
  \left|\db{K - 1}^{(0)}_0 - \cdb{K - 1}^{(0)}_0\right| =
  \left|\db{K}^{(0)}_0\right| \leq
  \left[\left(3^{K} \binom{n}{K} + \bigO{n^{K - 1}}\right)
  \mach^{K} + \bigO{\mach^{K + 1}}\right] \cdot \widetilde{p}(s).
\end{equation}
\end{lemma}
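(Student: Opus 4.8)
The plan is to run the same error-recursion argument that was used to bound $\db{2}^{(0)}_0$ in Lemma~\ref{lemma:db-lemma}, but now one level further down the cascade, and to feed in the Bernstein-sum estimates from Lemma~\ref{lemma:L-and-D-bounds} to control the accumulated local errors. First I would set $F = K-1$ and write down the exact recursion governing the propagation of the $(K-1)$-st global error. From the filtering identities in Section~\ref{sec:compensated-k},
\begin{equation}
  \db{K-1}_j^{(k)} = (1-s)\,\db{K-1}_j^{(k+1)} + s\,\db{K-1}_{j+1}^{(k+1)} + \ell_{K-1,j}^{(k)} - \bigl(\text{round-off in computing }\cdb{K-1}_j^{(k)}\bigr),
\end{equation}
so that the quantity $\db{K}_j^{(k)} = \db{K-1}_j^{(k)} - \cdb{K-1}_j^{(k)}$ satisfies a recursion of exactly the shape~\eqref{err-update}: it is transported by the reduction step $(1-s)\cdot + s\cdot$ and receives, at each node $(k,j)$, a fresh contribution consisting of (i) the difference $\ell_{K-1,j}^{(k)} - \widehat{\ell}_{K-1,j}^{(k)}$ between the exact and computed local error, and (ii) the three round-off errors from the two $\oplus$'s and the $\widehat r \otimes$ product used to assemble $\cdb{K-1}_j^{(k)}$ from $\widehat{\ell}$, $s\otimes\cdb{K-1}_{j+1}^{(k+1)}$ and $\widehat r\otimes\cdb{K-1}_j^{(k+1)}$. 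Since the base case is $\db{K}_j^{(n)} = 0$ for all $j$, unrolling the recursion expresses $\db{K}^{(0)}_0$ as a Bernstein-weighted sum over all nodes of these node-wise contributions:
\begin{equation}
  \bigl|\db{K}^{(0)}_0\bigr| \leq \sum_{k=0}^{n-1}\sum_{j=0}^{k} \bigl(\text{node contribution at }(k,j)\bigr)\,B_{j,k}(s),
\end{equation}
because the transport operator $U_{k+1}$ composes to give the Bernstein polynomials $B_{j,k}(s)$ exactly as in~\eqref{matrix-de-casteljau}, and all weights are nonnegative for $s\in[0,1]$.

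Next I would bound the node contribution at $(k,j)$. The error in computing $\cdb{K-1}_j^{(k)}$ from its three summands is, by the standard $\gamma$-calculus and Theorem~\ref{thm:eft}, at most $\gamma_3$ times the sum of absolute values of those three terms, i.e. bounded by $\gamma_3\bigl((1-s)|\cdb{K-1}_j^{(k+1)}| + s|\cdb{K-1}_{j+1}^{(k+1)}| + |\widehat\ell_{K-1,j}^{(k)}|\bigr)$ up to an already-controlled lower-order piece; and $|\ell_{K-1,j}^{(k)} - \widehat\ell_{K-1,j}^{(k)}|$ is, from the definition of \texttt{LocalErrorEFT} and the $5(K-1)-1$ terms it sums, at most $\gamma_{5(K-1)}\,\widetilde\ell_{K-1,j}^{(k)}$. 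Summing against $B_{j,k}(s)$ and using the definition $L_{F,k} = \sum_j \widetilde\ell_{F,j}^{(k)}B_{j,k}(s)$, together with the fact that $\sum_j(1-s)|\cdb{K-1}_j^{(k+1)}|B_{j,k}(s) + \ldots$ telescopes into $\widetilde{\cdb{K-1}}$-type sums already handled inside Lemma~\ref{lemma:L-and-D-bounds}, the whole thing collapses to a constant times $\sum_{k=0}^{n-1}\gamma_{3k + 5(K-1)}L_{K-1,k}$ plus strictly higher-order-in-$\mach$ remainders. Lemma~\ref{lemma:L-and-D-bounds}, applied with $F = K-1$, gives exactly $\sum_{k=0}^{n-1}\gamma_{3k+5(K-1)}L_{K-1,k} \leq \bigl[(3^{K}\binom{n}{K} + \bigO{n^{K-1}})\mach^{K} + \bigO{\mach^{K+1}}\bigr]\widetilde p(s)$, which is the claimed bound, so the leading constant $3^K\binom{n}{K}$ comes out for free from that lemma. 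The identity $|\db{K-1}^{(0)}_0 - \cdb{K-1}^{(0)}_0| = |\db{K}^{(0)}_0|$ is just the defining relation of the cascade.

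The main obstacle, and the part that needs real care rather than bookkeeping, is the accounting of \emph{which} $\gamma$-index attaches to each node and showing that everything that is not captured by the single term $\sum_k \gamma_{3k+5(K-1)}L_{K-1,k}$ is genuinely of order $\mach^{K+1}$ or smaller. There are two subtleties: first, the transported errors $\db{K}_j^{(k+1)}$ themselves pick up additional $(1+\theta)$ factors at each of the $k$ reduction steps, so one must verify that the product of these relative perturbations with the already-$\mach^{K}$-sized local contributions only affects the $\bigO{\mach^{K+1}}$ tail and not the leading constant; second, one must check that the ``missing $\rho\cdot\cdb{K-1}_j^{(k+1)}$'' term (the error from using $\widehat r\otimes$ in place of exact multiplication by $1-s$) has already been folded into the definition of $\widetilde\ell_{K,j}$ and hence into $L_{K-1,k}$ rather than being double-counted or dropped. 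Both points are handled exactly as in the proof of Lemma~\ref{lemma:db-lemma} (the $K=2$ case), so the proof is really an induction-free re-run of that argument with the index $1$ replaced by $K-1$ throughout and with the crude bound $2\gamma_{3n+2}\gamma_{3(n-1)}$ there replaced by the sharper Lemma~\ref{lemma:L-and-D-bounds} estimate; I would present it that way, pointing to Appendix~\ref{sec:appendix-proof-details} for the detailed $\gamma$-index verification.
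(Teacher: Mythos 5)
Your overall architecture matches the paper's: unroll the error recursion through the reduction matrices \(U_k\) so that \(\db{K}_0^{(0)}\) becomes a Bernstein-weighted sum of node-wise contributions, bound those contributions via the \(\widetilde{\ell}\) quantities, and invoke Lemma~\ref{lemma:L-and-D-bounds}. But there is a concrete gap at exactly the step you flag as delicate. In your decomposition the assembly round-off at node \((k,j)\) enters as the additive term \(\gamma_3\left((1-s)\left|\cdb{K-1}_j^{(k+1)}\right| + s\left|\cdb{K-1}_{j+1}^{(k+1)}\right|\right)\), which after summing against \(B_{j,k}(s)\) and over \(k\) gives \(\sum_{k=0}^{n-1}\gamma_3 D_{K-1,k+1}\) in the notation of the appendix. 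This is \emph{not} a higher-order remainder, and it does not sit inside \(\sum_k \gamma_{3k+5(K-1)}L_{K-1,k}\) up to a benign constant: since \(D_{K-1,k}\) is \(\bigO{\mach^{K-1}}\widetilde{p}(s)\) with a coefficient polynomial of degree \(K-1\) in \(n\), the sum \(\sum_k\gamma_3 D_{K-1,k+1}\) is \(\bigO{n^K\mach^K}\widetilde{p}(s)\) and in fact carries the \emph{entire} leading coefficient \(3^K\binom{n}{K}\) (in the recursion \(r_{F+1}(k)=3q_F(k-1)+5F\,r_F(k)\) from the appendix, the lead term comes from the \(3q_F\) part, i.e. precisely from these \(D\)-sums; the \(\gamma_{5F}L_{F,k}\) part contributes only \(\bigO{n^{K-1}\mach^K}\)). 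So discarding it as ``strictly higher-order'' loses the bound entirely, while ``a constant times \(\sum_k\gamma_{3k+5(K-1)}L_{K-1,k}\)'' with any constant other than \(1+\bigO{\mach}\) fails to produce the stated leading coefficient.

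The paper avoids this by never splitting off the assembly round-off additively: it writes the computed cascade in closed form, \(\cdb{F}^{(0)}=\sum_{k=0}^{n-1}\sum_{j=0}^{k}\widehat{\ell}_{F,j}^{(k)}(1+\theta_{3k+2})B_{j,k}(s)\), where the \((1+\theta_{3k+2})\) absorbs the three rounding errors per level accumulated over the \(k\) transport steps, so that \(\db{F+1}_0^{(0)}=\sum_{k}\sum_j\left(\ell_{F,j}^{(k)}-\widehat{\ell}_{F,j}^{(k)}(1+\theta_{3k+2})\right)B_{j,k}(s)\), each term is bounded by \(\gamma_{3k+5F}\widetilde{\ell}_{F,j}^{(k)}\), and \eqref{L-sum-bound} applies verbatim with \(F=K-1\). (Alternatively, your node-wise sum is exactly \(\sum_k L_{K,k}\), and summing the \emph{first} inequality of Lemma~\ref{lemma:L-and-D-bounds} over \(k\) also recovers \(3^K\binom{n}{K}\) via \(\sum_{k=1}^{n}\binom{k}{K-1}=\binom{n+1}{K}\); but that is not the route you describe.) Two smaller slips: your first displayed recursion is wrong as written --- the exact recursion is \(\db{K-1}_j^{(k)}=(1-s)\db{K-1}_j^{(k+1)}+s\db{K-1}_{j+1}^{(k+1)}+\ell_{K-1,j}^{(k)}\) with nothing subtracted, the round-off you subtract being by definition \(\ell_{K,j}^{(k)}\) and belonging to the recursion for \(\cdb{K-1}_j^{(k)}\); and the transported exact errors \(\db{K}_j^{(k+1)}\) do not pick up \((1+\theta)\) factors, since transport in the exact recursion is exact --- only the computed quantities \(\cdb{F}_j^{(k)}\) are perturbed.
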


\begin{proof}
As in \eqref{matrix-de-casteljau}, we can express the compensated
de Casteljau algorithm as
\begin{equation}
\db{F}^{(k)} = U_{k + 1} \db{F}^{(k + 1)} + \ell_{F}^{(k)}
\Longrightarrow \db{F}^{(0)} = \sum_{k = 0}^{n - 1}
U_1 \cdots U_k \ell_F^{(k)} = \sum_{k = 0}^{n - 1}
\left[\sum_{j = 0}^k \ell_{F, j}^{(k)} B_{j, k}(s)\right].
\end{equation}
For the inexact equivalent of these things, first note that
\(\widehat{r} = (1 - s)(1 + \delta)\). Due to this,
we put the \(\widehat{r}\) term at the end of each update step to reduce
the amount of round-off:
\begin{align}
  \cdb{F}_j^{(k)} &=
  \widehat{\ell}_{F, j}^{(k)} \oplus
  \left(s \otimes \cdb{F}_{j + 1}^{(k + 1)}\right) \oplus
  \left(\widehat{r} \otimes \cdb{F}_j^{(k + 1)}\right) \\
&= (1 - s) \cdot \cdb{F}_j^{(k + 1)}(1 + \theta_3) +
  s \cdot \cdb{F}_{j + 1}^{(k + 1)}(1 + \theta_3) +
  \widehat{\ell}_{F, j}^{(k)} (1 + \theta_2) \\
\Longrightarrow \cdb{F}^{(k)} &=
  U_{k + 1} \cdb{F}^{(k + 1)}(1 + \theta_3) +
  \widehat{\ell}_{F}^{(k)} (1 + \theta_2) \\
\Longrightarrow \cdb{F}^{(0)} &=
  \sum_{k = 0}^{n - 1}
  U_1 \cdots U_k \widehat{\ell}_F^{(k)} (1 + \theta_{3k + 2})
  = \sum_{k = 0}^{n - 1}
  \left[\sum_{j = 0}^k \widehat{\ell}_{F, j}^{(k)} (1 + \theta_{3k + 2})
    B_{j, k}(s)\right].
\end{align}
Since
\begin{equation}
\db{F + 1}_0^{(0)} = \db{F}_0^{(0)} - \cdb{F}_0^{(0)} = \sum_{k = 0}^{n - 1}
\sum_{j = 0}^k \left(\ell_{F, j}^{(k)} -
\widehat{\ell}_{F, j}^{(k)} (1 + \theta_{3k + 2})\right) B_{j, k}(s)
\end{equation}
it's useful to put a bound on \(\ell_{F, j}^{(k)} -
\widehat{\ell}_{F, j}^{(k)} (1 + \theta_{3k + 2})\). Via
\begin{align}
\widehat{\ell}_{F, j}^{(k)} &= e_1 \oplus \cdots \oplus e_{5F - 2} \oplus
\left(\rho \otimes \cdb{F - 1}_j^{(k + 1)}\right) \\
&= e_1\left(1 + \theta_{5F - 2}\right) + \cdots +
e_{5F - 2}\left(1 + \theta_2\right) +
\rho \cdot \cdb{F - 1}_j^{(k + 1)} \left(1 + \theta_2\right)
\end{align}
we see that
\begin{equation}
\left|\ell_{F, j}^{(k)} -
\widehat{\ell}_{F, j}^{(k)} (1 + \theta_{3k + 2})\right| \leq
\gamma_{3k + 5F} \cdot \widetilde{\ell}_{F, j}^{(k)}
\Longrightarrow
\left|\db{F + 1}_0^{(0)}\right| \leq \sum_{k = 0}^{n - 1}
\gamma_{3k + 5F} \sum_{j = 0}^k \widetilde{\ell}_{F, j}^{(k)} B_{j, k}(s).
\end{equation}
Applying \eqref{L-sum-bound} directly gives
\begin{equation}
\left|\db{F + 1}_0^{(0)}\right| \leq
  \left[\left(3^{F + 1} \binom{n}{F + 1} + \bigO{n^F}\right)
  \mach^{F + 1} + \bigO{\mach^{F + 2}}\right] \cdot \widetilde{p}(s).
\end{equation}
Letting \(K = F + 1\) we have our result.
\end{proof}

\begin{theorem}
If no underflow occurs, \(n \geq 2\) and \(s \in \left[0, 1\right]\)
\begin{multline}
  \frac{\left|p(s) - \mathtt{CompDeCasteljau}(p, s, K)\right|}{
    \left|p(s)\right|} \leq \left[\mach + \bigO{\mach^2}
    \right] + \\
    \left[\left(3^{K} \binom{n}{K} + \bigO{n^{K - 1}}\right) \mach^K +
    \bigO{\mach^{K + 1}}\right] \cond{p, s}.
\end{multline}
\end{theorem}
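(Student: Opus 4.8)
The plan is to read the computed value off as a length-$K$ floating-point sum, charge the error of that summation to Theorem~\ref{thm:sum-k}, and absorb the remaining discrepancy into the $K$-th order error already controlled by Lemma~\ref{lemma:k-order}. Iterating the cascade $b_j^{(k)} = \widehat{b}_j^{(k)} + \db{1}_j^{(k)}$, $\db{F}_j^{(k)} = \cdb{F}_j^{(k)} + \db{F + 1}_j^{(k)}$ at $k = j = 0$ yields the exact identity
\begin{equation}
p(s) = \widehat{b}_0^{(0)} + \cdb{1}_0^{(0)} + \cdots + \cdb{K - 1}_0^{(0)} + \db{K}_0^{(0)}.
\end{equation}
Writing $v = \bigl[\widehat{b}_0^{(0)}, \cdb{1}_0^{(0)}, \ldots, \cdb{K - 1}_0^{(0)}\bigr] \in \mathbf{F}^{K}$ and recalling that \texttt{CompDeCasteljauK} returns $\mathtt{SumK}(v, K)$, I would split the total error as $p(s) - \mathtt{SumK}(v, K) = \bigl(\sum_{i = 1}^{K} v_i - \mathtt{SumK}(v, K)\bigr) + \db{K}_0^{(0)}$ and estimate each term.

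For the first term, Theorem~\ref{thm:sum-k} with $n$ replaced by $K$ gives $\bigl|\sum_i v_i - \mathtt{SumK}(v, K)\bigr| \leq (\mach + 3\gamma_{K - 1}^2)\bigl|\sum_i v_i\bigr| + \gamma_{2K - 2}^{K} \sum_i |v_i|$. Since $\sum_i v_i = p(s) - \db{K}_0^{(0)}$, we have $\bigl|\sum_i v_i\bigr| \leq |p(s)| + |\db{K}_0^{(0)}|$; and I would bound $\sum_i |v_i| \leq (1 + \bigO{n\mach})\,\widetilde{p}(s)$, using $|\widehat{b}_0^{(0)}| \leq |p(s)| + |\db{1}_0^{(0)}| \leq (1 + \gamma_{3n})\,\widetilde{p}(s)$ (the de Casteljau error bound and $|p(s)| \leq \widetilde{p}(s)$) together with $|\cdb{F}_0^{(0)}| \leq |\db{F}_0^{(0)}| + |\db{F + 1}_0^{(0)}| = \bigO{(3n\mach)^{F}}\,\widetilde{p}(s)$ from Lemma~\ref{lemma:k-order} (for $F = 1$ one uses $|\db{1}_0^{(0)}| \leq \gamma_{3n}\,\widetilde{p}(s)$ from the de Casteljau bound). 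For the second term, Lemma~\ref{lemma:k-order} gives directly $|\db{K}_0^{(0)}| \leq \bigl[\bigl(3^{K}\binom{n}{K} + \bigO{n^{K - 1}}\bigr)\mach^{K} + \bigO{\mach^{K + 1}}\bigr]\widetilde{p}(s)$.

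Finally I would collect the pieces and divide by $|p(s)|$. The term $(\mach + 3\gamma_{K - 1}^2)|p(s)|$ produces the bracket $[\mach + \bigO{\mach^2}]$; the $\db{K}_0^{(0)}$ summand produces the leading $\bigl(3^{K}\binom{n}{K} + \bigO{n^{K - 1}}\bigr)\mach^{K}$ times $\cond{p,s} = \widetilde{p}(s)/|p(s)|$; the term $\gamma_{2K - 2}^{K}\sum_i|v_i| = \bigO{\mach^{K}}\,\widetilde{p}(s)$ has an $n$-independent coefficient, hence is $\bigO{n^{K - 1}\mach^{K}}\,\widetilde{p}(s)$ and folds into the $\bigO{n^{K - 1}}$ correction of the $\mach^{K}$ coefficient; and the cross term $(\mach + 3\gamma_{K - 1}^2)|\db{K}_0^{(0)}|$ together with the $\bigO{\mach^{K + 1}}$ tail of $|\db{K}_0^{(0)}|$ are $\bigO{\mach^{K + 1}}\,\widetilde{p}(s)$ (with constants allowed to depend on $n$ and $K$) and fold into the last term, giving the stated inequality. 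The closest thing to an obstacle is the bookkeeping in this final collection step — checking that each auxiliary term either carries an extra factor $\mach$ beyond $\mach^{K}$ or has an $n$-independent coefficient — but that is routine once $n$ and $K$ are treated as fixed parameters with $n\mach$ and $K\mach$ bounded away from $1$ (the standing ``no underflow / moderate $n$'' assumptions, inherited through Lemma~\ref{lemma:L-and-D-bounds}); the two substantive inputs, the cited \texttt{SumK} bound and Lemma~\ref{lemma:k-order}, do all the real work.
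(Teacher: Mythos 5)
Your proposal is correct and follows essentially the same route as the paper: the exact cascade identity \(p(s) = \widehat{b}_0^{(0)} + \cdb{1}_0^{(0)} + \cdots + \cdb{K-1}_0^{(0)} + \db{K}_0^{(0)}\), Theorem~\ref{thm:sum-k} for the summation error, Lemma~\ref{lemma:k-order} for \(\left|\db{K}_0^{(0)}\right|\), and the same collection of lower-order terms. The only (immaterial) differences are that you correctly instantiate the \texttt{SumK} bound with vector length \(K\) rather than \(n\), and you absorb the \(\gamma^K \sum_i \left|v_i\right|\) term entirely into the \(\mach^K\,\widetilde{p}(s)\) bracket, whereas the paper splits off \(\left|\widehat{b}_0^{(0)}\right|\) and charges part of it to \(\left|p(s)\right|\); both bookkeepings yield the stated inequality.
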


\begin{proof}
Since
\begin{equation}
\mathtt{CompDeCasteljau}(p, s, K) = \mathtt{SumK}\left(\left[
  \widehat{b}_0^{(0)}, \ldots, \cdb{K - 1}_0^{(0)}\right], K\right),
\end{equation}
applying Theorem~\ref{thm:sum-k} tells us that
\begin{equation}\label{sum-k-applied}
\left|\mathtt{CompDeCasteljau}(p, s, K) - \sum_{F = 0}^{K - 1}
\cdb{F}_0^{(0)}\right| \leq
\left(\mach + 3 \gamma_{n - 1}^2\right) \left|\sum_{F = 0}^{K - 1}
\cdb{F}_0^{(0)}\right| +
\gamma_{2n - 2}^K \sum_{F = 0}^{K - 1} \left|\cdb{F}_0^{(0)}\right|.
\end{equation}
Since
\begin{equation}
p(s) = b_0^{(0)} = \widehat{b}_0^{(0)} + \db{1}_0^{(0)}
= \cdots
= \widehat{b}_0^{(0)} + \cdb{1}_0^{(0)} + \cdots
+ \cdb{K - 1}_0^{(0)} + \db{K}_0^{(0)}
\end{equation}
we have
\begin{gather}
\left|\sum_{F = 0}^{K - 1} \cdb{F}_0^{(0)}\right|
\leq \left|p(s)\right| + \left|\db{K}_0^{(0)}\right| \quad \text{and} \\
\left|\mathtt{CompDeCasteljau}(p, s, K) - p(s)\right| \leq
\left|\mathtt{CompDeCasteljau}(p, s, K) - \sum_{F = 0}^{K - 1}
\cdb{F}_0^{(0)}\right| +
\left|\db{K}_0^{(0)}\right| \label{triangle-ps}.
\end{gather}
Due to Lemma~\ref{lemma:k-order}, \(\db{F}_0^{(0)} =
\bigO{\mach^F} \widetilde{p}(s)\), hence
\begin{align}
\left(\mach + 3 \gamma_{n - 1}^2\right) \left|\sum_{F = 0}^{K - 1}
\cdb{F}_0^{(0)}\right| &\leq
\left[\mach + \bigO{\mach^2}\right] \left|p(s)\right| +
\bigO{\mach^{K + 1}} \widetilde{p}(s) \\
\gamma_{2n - 2}^K \sum_{F = 0}^{K - 1} \left|\cdb{F}_0^{(0)}\right| &\leq
\gamma_{2n - 2}^K \left|\widehat{b}_0^{(0)}\right| +
\bigO{\mach^{K + 1}} \widetilde{p}(s) \\
&\leq
\gamma_{2n - 2}^K \left[\left|p(s)\right| +
  \bigO{\mach} \widetilde{p}(s)\right] +
\bigO{\mach^{K + 1}} \widetilde{p}(s).
\end{align}
Combining this with \eqref{sum-k-applied} and \eqref{triangle-ps}, we
see
\begin{align}
& \left|\mathtt{CompDeCasteljau}(p, s, K) - p(s)\right| \\
\leq &
\left[\mach + \bigO{\mach^2}\right] \left|p(s)\right| +
\left|\db{K}_0^{(0)}\right| +
\bigO{\mach^{K + 1}} \widetilde{p}(s) \\
\leq &
\left[\mach + \bigO{\mach^2}\right] \left|p(s)\right| +
\left[\left(3^{K} \binom{n}{K} + \bigO{n^{K - 1}}\right) \mach^K +
\bigO{\mach^{K + 1}} \right]
\widetilde{p}(s).
\end{align}
Dividing this by \(\left|p(s)\right|\), we have our result.
\end{proof}

For the first few values of \(K\) the coefficient of
\(\cond{p, s}\) in the bound is
\begin{center}
  \begin{tabular}{>{$}c<{$} c >{$}c<{$}}
    K & Method & \text{Multiplier} \\
    \midrule
    1 & \texttt{DeCasteljau} & 3 \binom{n}{1} \mach =
      3n \mach \approx \gamma_{3n} \\[0.125cm]
    2 & \texttt{CompDeCasteljau} & \left[9 \binom{n}{2} +
      15 \binom{n}{1}\right]\mach^2 = \frac{3n(3n + 7)}{2} \mach^2
      \approx \frac{1}{4} \cdot 2 \gamma_{3n}^2 \\[0.125cm]
    3 & \texttt{CompDeCasteljau3} & \left[27 \binom{n}{3} +
      135 \binom{n}{2} + 150 \binom{n}{1}\right] \mach^3 =
      \frac{3n(3n^2 + 36n + 61)}{2} \mach^3 \\[0.125cm]
    4 & \texttt{CompDeCasteljau4} & \left[81 \binom{n}{4} + 810 \binom{n}{3} +
      2475 \binom{n}{2} + 2250 \binom{n}{1}\right] \mach^4 \\[0.125cm]
  \end{tabular}
\end{center}
See the \hyperref[proof:L-and-D-bounds]{proof} of
Lemma~\ref{lemma:L-and-D-bounds} for more details on where these
polynomials come from.

\section{Numerical experiments}\label{sec:numerical}

\begin{figure}
  \includegraphics[width=0.9375\textwidth]{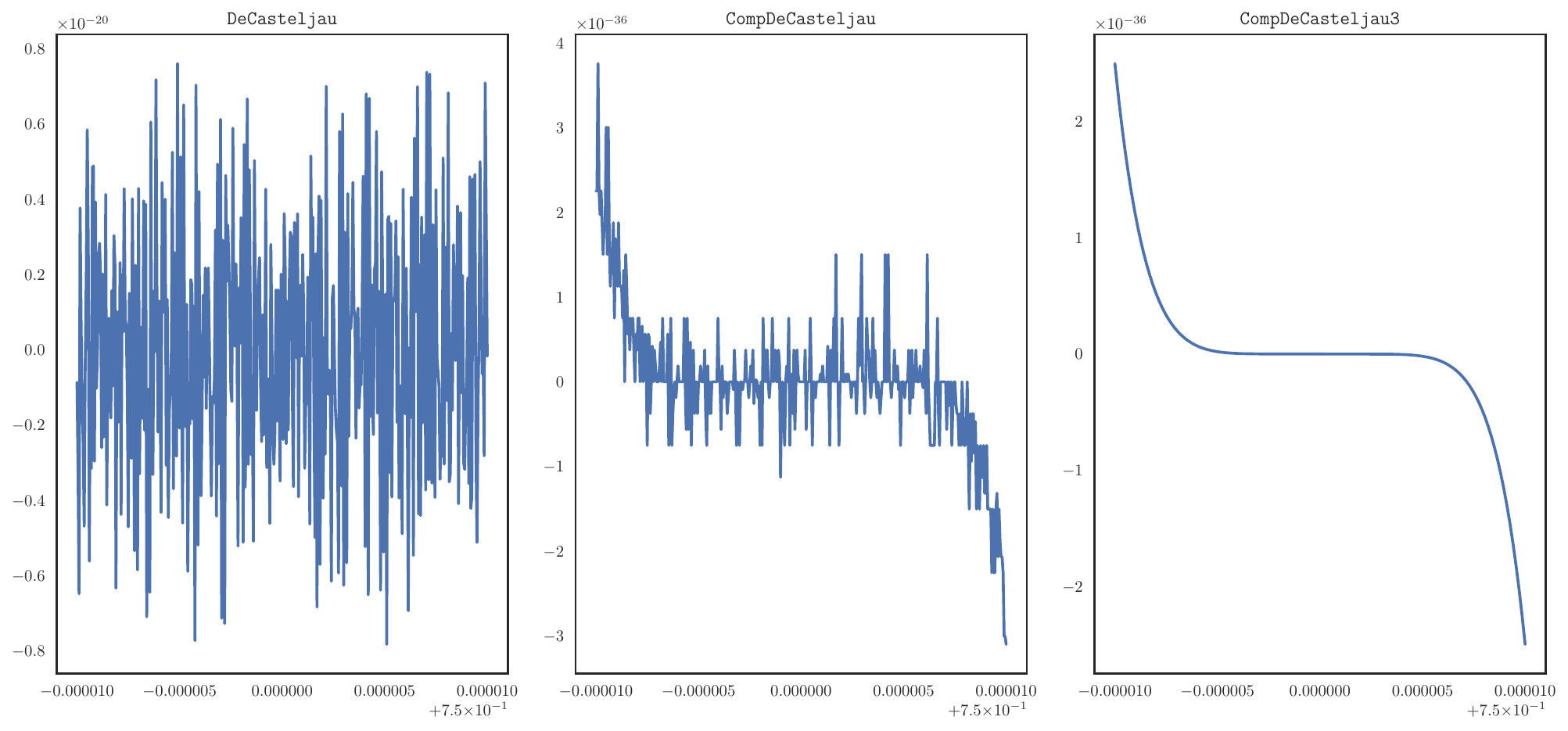}
  \centering
  \captionsetup{width=.75\linewidth}
  \caption{Evaluation of \(p(s) = (s - 1)\left(s - 3/4\right)^7\)
    in the neighborhood of its multiple root \(3/4\).}
  \label{fig:smooth-drawing}
\end{figure}

All experiments were performed in IEEE-754 double precision.
As in \cite{Jiang2010}, we consider the evaluation in the neighborhood
of the multiple root of \(p(s) = (s - 1)\left(s - 3/4\right)^7\),
written in Bernstein form.
Figure~\ref{fig:smooth-drawing} shows the evaluation of \(p(s)\) at
the 401 equally spaced\footnote{It's worth noting that \(0.1\) cannot
be represented exactly in IEEE-754 double precision (or any binary
arithmetic for that matter). Hence (most of) the points of the form
\(a + b \cdot 10^{-c}\) can only be approximately represented.} points
\(\left\{\frac{3}{4} + j \frac{10^{-7}}{2}\right\}_{j=-200}^{200}\)
with \texttt{DeCasteljau} (Algorithm~\ref{alg:de-casteljau}),
\texttt{CompDeCasteljau} (Algorithm~\ref{alg:comp-de-casteljau})
and \texttt{CompDeCasteljau3} (Algorithm~\ref{alg:k-comp-de-casteljau}
with \(K = 3\)). We see that \texttt{DeCasteljau} fails to get the
magnitude correct, \texttt{CompDeCasteljau} has the right shape but
lots of noise and \texttt{CompDeCasteljau3} is able to smoothly evaluate
the function. This is in contrast to a similar figure in \cite{Jiang2010},
where the plot was smooth for the 400 equally spaced points
\(\left\{\frac{3}{4} + \frac{10^{-4}}{2} \frac{2j - 399}{399}
\right\}_{j=0}^{399}\). The primary difference is that as the interval
shrinks by a factor of \(\approx \frac{10^{-4}}{10^{-7}} = 10^3\), the
condition number goes up by \(\approx 10^{21}\) and \texttt{CompDeCasteljau}
is no longer accurate.

\begin{figure}
  \includegraphics[width=0.8125\textwidth]{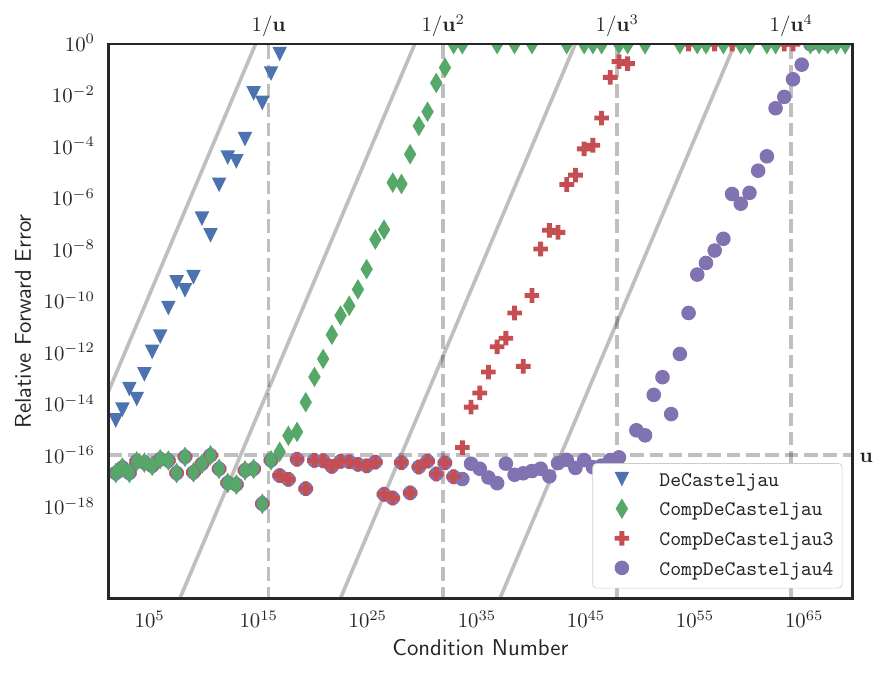}
  \centering
  \captionsetup{width=.75\linewidth}
  \caption{Accuracy of evaluation of \(p(s) = (s - 1)\left(s - 3/4\right)^7\)
    represented in Bernstein form.}
  \label{fig:compensated-k}
\end{figure}

Figure~\ref{fig:compensated-k} shows the relative forward errors compared
against the condition number. To compute relative errors, each input and
coefficient is converted to a fraction (i.e. infinite precision) and
\(p(s)\) is computed exactly as a fraction, then
compared to the corresponding computed values. Similar tools are used to
\textbf{exactly} compute the condition number, though here we can rely
on the fact that \(\widetilde{p}(s) = (s - 1)
\left(s/2 - 3/4\right)^7\). Once the relative errors and
condition numbers are computed as fractions, they are rounded to the
nearest IEEE-754 double precision value. As in \cite{Jiang2010}, we use
values \(\left\{\frac{3}{4} - (1.3)^j\right\}_{j=-5}^{-90}\)\footnote{As with
\(0.1\), it's worth noting that \((1.3)^j\) can't be represented exactly in
IEEE-754 double precision. However, this geometric series still serves a
useful purpose since it continues to raise \(\cond{p, s}\) as \(j\) decreases
away from \(0\) and because it results in ``random'' changes in the bits of
\(0.75\) that are impacted by subtracting \((1.3)^j\).}. The curves for
\texttt{DeCasteljau} and \texttt{CompDeCasteljau} trace the same paths seen
in \cite{Jiang2010}. In particular, \texttt{CompDeCasteljau} has a relative
error that is \(\bigO{\mach}\) until \(\cond{p, s}\) reaches
\(1 / \mach\), at which point the relative error increases linearly with
the condition number until it becomes \(\bigO{1}\) when
\(\cond{p, s}\) reaches \(1 / \mach^2\).
Similarly, the relative error in \texttt{CompDeCasteljau3}
(Algorithm~\ref{alg:k-comp-de-casteljau} with \(K = 3\))
is \(\bigO{\mach}\) until \(\cond{p, s}\) reaches
\(1 / \mach^2\) at which point the relative error increases linearly
to \(\bigO{1}\) when \(\cond{p, s}\) reaches \(1 / \mach^3\)
and the relative error in \texttt{CompDeCasteljau4}
(Algorithm~\ref{alg:k-comp-de-casteljau} with \(K = 4\))
is \(\bigO{\mach}\) until \(\cond{p, s}\) reaches
\(1 / \mach^3\) at which point the relative error increases linearly
to \(\bigO{1}\) when \(\cond{p, s}\) reaches \(1 / \mach^4\).

\section{Future Work}

The family of algorithms described in this paper have been implemented in
C, C++ and Python by the author (\cite{KCompensatedGitHub}). A more complete
compensated algorithms library (similar to \cite{Barrio2018}) could be quite
useful. For example, such a library could include the algorithms in the
existing literature such as the \(K\)-compensated algorithm for Horner's method
from \cite{Graillat2009}.

\section{Acknowledgements}

The author would like to thank \cite{Ogita2005}, \cite{Graillat2009} and
\cite{Jiang2010} for their papers that motivated this work. In particular,
the work of \cite{Ogita2005} reignited the path to compensated algorithms set
forth in \cite{Babuska1968}, \cite{Knuth1969} and \cite{Dekker1971}.

\section*{\refname}
\bibliography{paper}

\appendix

\section{Algorithms}\label{sec:appendix-algo}

Find here concrete implementation details on the EFTs described
in Theorem~\ref{thm:eft}. They do not use branches, nor access to the
mantissa that can be time-consuming.

\begin{breakablealgorithm}
  \caption{\textit{EFT of the sum of two floating point numbers.}}

  \begin{algorithmic}
    \Function{\(\left[S, \sigma\right] = \mathtt{TwoSum}\)}{$a, b$}
      \State \(S = a \oplus b\)
      \State \(z = S \ominus a\)
      \State \(\sigma = (a \ominus (S \ominus z)) \oplus (b \ominus z)\)
    \EndFunction
  \end{algorithmic}
\end{breakablealgorithm}

\noindent In order to avoid branching to check which among
\(\left|a\right|, \left|b\right|\) is largest, \texttt{TwoSum} uses 6 flops
rather than 3.

\begin{breakablealgorithm}
  \caption{\textit{Splitting of a floating point number into two parts.}}

  \begin{algorithmic}
    \Function{\(\left[h, \ell\right] = \mathtt{Split}\)}{$a$}
      \State \(z = a \otimes (2^r + 1)\)
      \State \(h = z \ominus (z \ominus a)\)
      \State \(\ell = a \ominus h\)
    \EndFunction
  \end{algorithmic}
\end{breakablealgorithm}

\noindent For IEEE-754 double precision floating point number, \(r = 27\)
so \(2^r + 1\) will be known before \texttt{Split} is called. In all,
\texttt{Split} uses 4 flops.

\begin{breakablealgorithm}
  \caption{\textit{EFT of the product of two floating point numbers.}}

  \begin{algorithmic}
    \Function{\(\left[P, \pi\right] = \mathtt{TwoProd}\)}{$a, b$}
      \State \(P = a \otimes b\)
      \State \(\left[a_h, a_{\ell}\right] = \mathtt{Split}(a)\)
      \State \(\left[b_h, b_{\ell}\right] = \mathtt{Split}(b)\)
      \State \(\pi = a_{\ell} \otimes b_{\ell} \ominus (((P \ominus
          a_h \otimes b_h)
          \ominus a_{\ell} \otimes b_h) \ominus a_h \otimes b_{\ell})\)
    \EndFunction
  \end{algorithmic}
\end{breakablealgorithm}

\noindent This implementation of \texttt{TwoProd} requires 17 flops.
For processors that provide a fused-multipy-add operator (\texttt{FMA}),
\texttt{TwoProd} can be rewritten to use only 2 flops:

\begin{breakablealgorithm}
  \caption{\textit{EFT of the sum of two floating point numbers with a FMA.}}
  \label{alg:two-prod-fma}

  \begin{algorithmic}
    \Function{\(\left[P, \pi\right] = \mathtt{TwoProdFMA}\)}{$a, b$}
      \State \(P = a \otimes b\)
      \State \(\pi = \mathtt{FMA}(a, b, -P)\)
    \EndFunction
  \end{algorithmic}
\end{breakablealgorithm}

\noindent The following algorithms from \cite{Ogita2005} can be used as a
compensated method for computing a sum of numbers. The first is a vector
transformation that is used as a helper:

\begin{breakablealgorithm}
  \caption{\textit{Error-free vector transformation for summation.}}
  \label{alg:vec-sum}

  \begin{algorithmic}
    \Function{\(\mathtt{VecSum}\)}{$p$}
      \State \(n = \texttt{length}(p)\)
      \For{\(j = 2, \ldots, n\)}
        \State \(\left[p_j, p_{j - 1}\right] = \mathtt{TwoSum}\left(
            p_j, p_{j - 1}\right)\)
      \EndFor
    \EndFunction
  \end{algorithmic}
\end{breakablealgorithm}

\noindent The second (\texttt{SumK}) computes a sum with results that are as
accurate as if computed in \(K\) times the working precision. It requires
\((6K - 5)(n - 1)\) floating point operations.

\begin{breakablealgorithm}
  \caption{\textit{Summation as in K-fold precision by \((K - 1)\)-fold error-free vector transformation.}}
  \label{alg:sum-k}

  \begin{algorithmic}
    \Function{\(\mathtt{result} = \mathtt{SumK}\)}{$p, K$}
      \For{\(j = 1, \ldots, K - 1\)}
        \State \(p = \mathtt{VecSum}(p)\)
      \EndFor
      \State \(\mathtt{result} = p_1 \oplus p_2 \oplus \cdots \oplus p_n\)
    \EndFunction
  \end{algorithmic}
\end{breakablealgorithm}

\noindent Since the final error \(\cdb{K - 1}\) will not track the errors
during computation, we have a non-EFT version of
Algorithm~\ref{alg:local-error-eft}:

\begin{breakablealgorithm}
  \caption{\textit{Compute the local error (non-EFT).}}
  \label{alg:local-error}

  \begin{algorithmic}
    \Function{\(\widehat{\ell} =
        \mathtt{LocalError}\)}{$e, \rho, \delta b$}
      \State \(L = \texttt{length}(e)\)
      \\
      \State \(\widehat{\ell} = e_1 \oplus e_2\)
      \For{\(j = 3, \ldots, L\)}
        \State \(\widehat{\ell} = \widehat{\ell} \oplus e_j\)
      \EndFor
      \\
      \State \(\widehat{\ell} = \widehat{\ell} \oplus \left(
          \rho \otimes \delta b\right)\)
    \EndFunction
  \end{algorithmic}
\end{breakablealgorithm}

\section{Proof Details}\label{sec:appendix-proof-details}

\begin{proof}[Proof of Lemma~\ref{lemma:ell-tilde}]
We'll start with the \(F = 1\) case. Recall where the terms originate:
\begin{align}
\left[P_1, e_1\right] &= \mathtt{TwoProd}\left(\widehat{r},
  \widehat{b}_j^{(k + 1)}\right) \\
\left[P_2, e_2\right] &= \mathtt{TwoProd}\left(s,
  \widehat{b}_{j + 1}^{(k + 1)}\right) \\
\left[\widehat{b}_j^{(k)}, e_3\right] &= \mathtt{TwoSum}\left(P_1, P_2\right).
\end{align}
Hence Theorem~\ref{thm:eft} tells us that
\begin{align}
\left|P_1\right| &\leq (1 + \mach)\left|\widehat{r} \cdot
  \widehat{b}_j^{(k + 1)}\right| \leq (1 + \mach)^2 (1 - s)
  \left|\widehat{b}_j^{(k + 1)}\right| \\
\left|e_1\right| &\leq \mach \left|\widehat{r} \cdot
  \widehat{b}_j^{(k + 1)}\right| \leq \mach(1 + \mach)(1 - s) \left|
  \widehat{b}_j^{(k + 1)}\right| \\
\left|P_2\right| &\leq (1 + \mach) s \left|\widehat{b}_{j + 1}^{(k + 1)}\right| \\
\left|e_2\right| &\leq \mach s \left|\widehat{b}_{j + 1}^{(k + 1)}\right| \\
\left|e_3\right| &\leq \mach \left|P_1\right| + \mach\left|P_2\right| \\
\left|\rho \cdot \widehat{b}_j^{(k + 1)}\right| &\leq
(1 + \mach)(1 - s) \left|\widehat{b}_j^{(k + 1)}\right|.
\end{align}
In general, we can swap \(\mach\left|P_j\right|\) for
\((1 + \mach)\left|e_j\right|\) based on how closely related the bound
on the result and the bound on the error are. Thus
\begin{align}
\widetilde{\ell}_{1, j}^{(k)} &= \left|e_1\right| + \left|e_2\right| +
  \left|e_3\right| + \left|\rho \cdot \widehat{b}_j^{(k + 1)}\right| \\
&\leq (2 + \mach)\left(\left|e_1\right| + \left|e_2\right|\right) +
  (1 + \mach)(1 - s) \left|\widehat{b}_j^{(k + 1)}\right| \\
&\leq \left[(1 + \mach)^3 - 1\right] (1 - s) \left|
  \widehat{b}_j^{(k + 1)}\right| + \left[(1 + \mach)^2 - 1\right] s \left|
  \widehat{b}_{j + 1}^{(k + 1)}\right| \\
&\leq \gamma_3 \left((1 - s) \left|\widehat{b}_j^{(k + 1)}\right| +
  s \left|\widehat{b}_{j + 1}^{(k + 1)}\right|\right).
\end{align}
For \(\widetilde{\ell}_{F + 1}\), we want to relate the ``current'' errors
\(e_1, \ldots, e_{5F + 3}\) to the ``previous'' errors \(e_1',
\ldots, e_{5F - 2}'\) that show up in \(\widetilde{\ell}_F\). In the same
fashion as above, we track where the current errors come from:
\begin{align}
\left[S_1, e_1\right] &= \mathtt{TwoSum}\left(e_1', e_2'\right) \\
\left[S_2, e_2\right] &= \mathtt{TwoSum}\left(S_1, e_3'\right) \\
&\mathrel{\makebox[\widthof{=}]{\vdots}} \nonumber \\
\left[S_{5F - 3}, e_{5F - 3}\right] &=
  \mathtt{TwoSum}\left(S_{5F - 4}, e_{5F - 2}'\right) \\
\left[P_{5F - 2}, e_{5F - 2}\right] &= \mathtt{TwoProd}\left(\rho,
  \cdb{F - 1}_j^{(k + 1)}\right) \\
\left[\widehat{\ell}_{F, j}^{(k)}, e_{5F - 1}\right] &=
  \mathtt{TwoSum}\left(S_{5F - 3}, P_{5F - 2}\right) \\
\left[P_{5F}, e_{5F}\right] &= \mathtt{TwoProd}\left(s,
  \cdb{F}_{j + 1}^{(k + 1)}\right) \\
\left[S_{5F + 1}, e_{5F + 1}\right] &=
  \mathtt{TwoSum}\left(\widehat{\ell}_{F, j}^{(k)}, P_{5F}\right) \\
\left[P_{5F + 2}, e_{5F + 2}\right] &= \mathtt{TwoProd}\left(\rho,
  \cdb{F}_j^{(k + 1)}\right) \\
\left[\cdb{F}_j^{(k)}, e_{5F + 3}\right] &= \mathtt{TwoSum}\left(
  S_{5F + 1}, P_{5F + 2}\right).
\end{align}
Arguing as we did above, we start with
\(\left|e_1\right| \leq \mach \left|e_1'\right| + \mach \left|e_2'\right|\)
and build each bound recursively based on the previous, e.g.
\(\left|e_2\right| \leq \mach \left|S_1\right| + \mach \left|e_3'\right| \leq
(1 + \mach) \mach \left|e_1'\right| + (1 + \mach) \mach \left|e_2'\right| +
\mach \left|e_3'\right|\). Proceeding in this fashion, we find
\begin{align}
\widetilde{\ell}_{F + 1, j}^{(k)} &= \left|e_1\right| + \cdots +
  \left|e_{5F + 3}\right| + \left|\rho \cdot \cdb{F}_j^{(k + 1)}\right| \\
&\leq \gamma_{5F} \left|e_1'\right| + \gamma_{5F} \left|e_2'\right| +
  \gamma_{5F - 1} \left|e_3'\right| + \cdots +
  \gamma_4 \left|e_{5F - 2}'\right| +
  \gamma_4 \left|\rho \cdot \cdb{F - 1}_j^{(k + 1)}\right| \\
&\qquad + \gamma_3 (1 - s) \left|
  \cdb{F}_j^{(k + 1)}\right| + \gamma_3 s \left|
  \cdb{F}_{j + 1}^{(k + 1)}\right| \\
&\leq \gamma_3 \left(
  (1 - s) \left|\cdb{F}_j^{(k + 1)}\right| +
  s \left|\cdb{F}_{j + 1}^{(k + 1)}\right|\right) +
  \gamma_{5F} \cdot \widetilde{\ell}_{F, j}^{(k)}
\end{align}
as desired.
\end{proof}

\begin{proof}[Proof of Lemma~\ref{lemma:L-and-D-bounds}]\label{proof:L-and-D-bounds}
First, note that for \textbf{any} sequence \(v_0, \ldots, v_{k + 1}\) we
must have
\begin{equation}
\sum_{j = 0}^k \left[(1 - s) v_j + s v_{j + 1}\right] B_{j, k}(s) =
\sum_{j = 0}^{k + 1} v_j B_{j, k + 1}(s).
\end{equation}
For example of this in use, via \eqref{ell-tilde-1}, we have
\begin{equation}
  L_{1, k} \leq \gamma_3 \sum_{j = 0}^{k + 1} \left|
  \widehat{b}_j^{(k + 1)}\right| B_{j, k + 1}(s).
\end{equation}
In order to work with sums of this form, we define Bernstein-type
sums related to \(L_{F, k}\):
\begin{align}
D_{0, k} &\coloneqq \sum_{j = 0}^k \left|\widehat{b}_j^{(k)}\right|
B_{j, k}(s) \\
D_{F, k} &\coloneqq \sum_{j = 0}^k \left|\cdb{F}_j^{(k)}\right| B_{j, k}(s).
\end{align}
Hence Lemma~\ref{lemma:ell-tilde} gives
\begin{align}
L_{1, k} &\leq \gamma_3 D_{0, k + 1} \label{ell-1-k} \\
L_{F + 1, k} &\leq \gamma_3 D_{F, k + 1} + \gamma_{5F} L_{F, k}
\label{ell-F-k}
\end{align}
In addition, for \(F \geq 1\) since
\begin{align}
\cdb{F}_j^{(k)} &= \widehat{\ell}_{F, j}^{(k)} \oplus \left(
  s \otimes \cdb{F}_{j + 1}^{(k + 1)}\right) \oplus \left((1 \ominus s) \otimes
  \cdb{F}_{j}^{(k + 1)}\right) \\
&= (1 - s) \cdot \cdb{F}_j^{(k + 1)}(1 + \theta_3) +
  s \cdot \cdb{F}_{j + 1}^{(k + 1)}(1 + \theta_3) +
  \widehat{\ell}_{F, j}^{(k)} (1 + \theta_2)
\end{align}
we have
\begin{equation}\label{df-first}
D_{F, k} \leq (1 + \gamma_3) D_{F, k + 1} + (1 + \gamma_2) \sum_{j = 0}^k
\left|\widehat{\ell}_{F, j}^{(k)}\right| B_{j, k}(s).
\end{equation}
Since \(\ell_{F, j}^{(k)}\) has \(5F - 1\) terms (only the last of which
involves a product), the terms in the computed value will be involved in
at most \(5F - 2\) flops, hence
\(\left|\widehat{\ell}_{F, j}^{(k)}\right| \leq
\left(1 + \gamma_{5F - 2}\right) \widetilde{\ell}_{F, j}^{(k)}.\)
Combined with \eqref{df-first} and the fact that there is no local error
when \(F = 0\), this means
\begin{align}
D_{0, k} &\leq (1 + \gamma_3) D_{0, k + 1} \label{d-0-k} \\
D_{F, k} &\leq (1 + \gamma_3) D_{F, k + 1} + (1 + \gamma_{5F}) L_{F, k}.
\label{d-F-k}
\end{align}
The four inequalities \eqref{ell-1-k}, \eqref{ell-F-k}, \eqref{d-0-k}
and \eqref{d-F-k} allow us to write all bounds in terms of
\(D_{0, n} = \widetilde{p}(s)\) and \(D_{F, n} = 0\). From \eqref{d-0-k}
we can conclude that \(D_{0, n - k} \leq \left(1 + \gamma_{3k}\right) \cdot
\widetilde{p}(s)\) and from \eqref{ell-1-k} that \(L_{1, n - k} \leq
\gamma_3 \left(1 + \gamma_{3(k - 1)}\right) \cdot \widetilde{p}(s)\).

To show the bounds for higher values of \(F\), we'll assume we have
bounds of the form
\(D_{F, n - k} \leq \left(q_F(k) \mach^F + \bigO{\mach^{F + 1}}\right) \cdot
\widetilde{p}(s)\) and
\(L_{F, n - k} \leq \left(r_F(k) \mach^F + \bigO{\mach^{F + 1}}\right) \cdot
\widetilde{p}(s)\) for two families of polynomials \(q_F(k), r_F(k)\). We
have \(q_0(k) = 1\) and \(r_1(k) = 3\) as our base cases and can build from
there. To satisfy \eqref{d-F-k}, we'd like
\(q_F(k) = q_F(k - 1) + r_F(k)\)
and for \eqref{ell-F-k}
\(r_{F + 1}(k) = 3 q_F(k - 1) + 5 F r_F(k)\).
Since the forward difference \(\Delta q_F(k) = r_F(k + 1)\) is known,
we can inductively solve for \(q_F\) in terms of \(q_F(0)\). But
\(D_{F, n} = 0\) gives \(q_F(0) = 0\).

For example, since we have \(r_1(k) = 3 \binom{k}{0}\) we'll have
\(q_1(k) = 3 \binom{k}{1}\). Once this is known
\begin{equation}
r_2(k) = 3 q_1(k - 1) + 5 r_1(k) = 3 \cdot 3 \binom{k - 1}{1} +
5 \cdot 3 \binom{k}{0} = 9 \binom{k}{1} + 6 \binom{k}{0}.
\end{equation}
If we write these polynomials in the ``falling factorial'' basis of
forward differences, then we can show that
\begin{equation}
r_F(k) = 3^F \binom{k}{F} + \cdots
\end{equation}
which will complete the proof of the first inequality. To see this, first
note that for a polynomial in this basis
\(f(k) = A \binom{k}{d} + B \binom{k}{d - 1} + C \binom{k}{d - 2} +
D \binom{k}{d - 3} + \cdots\) we have
\begin{align}
f(k + 1) &= A \binom{k}{d} + (A + B) \binom{k}{d - 1} +
  (B + C) \binom{k}{d - 2} + (C + D) \binom{k}{d - 3} + \cdots \\
f(k - 1) &= A \binom{k}{d} + (B - A) \binom{k}{d - 1} +
  (C - B + A) \binom{k}{d - 2} + (D - C + B - A) \binom{k}{d - 3} + \cdots
\end{align}
Using these, we can show that if
\(r_F(k) = \sum_{j = 0}^{F - 1} c_j \binom{k}{j}\) then
\begin{align}
q_F(k) &= c_{F - 1} \binom{k}{F} + \sum_{j = 1}^{F - 1}
  (c_j + c_{j - 1}) \binom{k}{j} \\
r_{F + 1}(k) &= 3 \left[-c_0 \binom{k}{0} +
  \sum_{j = 1}^F c_{j - 1} \binom{k}{j}\right] +
5F \left[\sum_{j = 0}^{F - 1} c_j \binom{k}{j}\right] =
3 c_{F - 1} \binom{k}{F} + \cdots
\end{align}
Under the inductive hypothesis \(c_{F - 1} = 3^F\) so that
the lead term in \(r_{F + 1}(k)\) is \(3 c_{F - 1} \binom{k}{F}
= 3^{F + 1} \binom{k}{F}\).

For the second inequality, we'll show that
\begin{equation}
\sum_{k = 0}^{n - 1} \gamma_{3k + 5F} L_{F, k} \leq
  \left[q_{F + 1}(n) \mach^{F + 1} +
  \bigO{\mach^{F + 2}}\right] \cdot \widetilde{p}(s)
\end{equation}
and then we'll have our result since we showed above that
\(q_{F + 1}(n) = 3^{F + 1} \binom{n}{F + 1} + \bigO{n^F}\). Since
\(\gamma_{3k + 5F} L_{F, k} \leq (3k + 5F) L_{F, k} \mach +
\bigO{\mach^{F + 2}} \widetilde{p}(s)\) it's enough to consider
\begin{equation}
\sum_{k = 0}^{n - 1} (3k + 5F) r_F(n - k) =
\sum_{k = 1}^n (3(n - k) + 5F) r_F(k).
\end{equation}
Since \(q_F(k) = q_F(k - 1) + r_F(k)\) and \(q_F(0) = 0\) we have
\(q_{F}(n) = \sum_{k = 1}^n r_{F}(k)\) thus
\begin{equation}
q_{F + 1}(n) = \sum_{k = 1}^n r_{F + 1}(k)
= \sum_{k = 1}^n 3 q_F(k - 1) + 5 F r_F(k)
= \sum_{k = 1}^n 3 \left[\sum_{j = 1}^{k - 1} r_F(j)\right] + 5 F r_F(k).
\end{equation}
Swapping the order of summation and grouping like terms, we have our
result.
\end{proof}

\end{document}